\documentclass[10pt,letterpaper,reqno]{amsart}
\usepackage{amsmath,amssymb,mathtools,booktabs,enumitem}
\usepackage[expansion=false]{microtype}
\usepackage{cite}
\usepackage[hidelinks]{hyperref}
\theoremstyle{plain}
\newtheorem{theorem}{Theorem}[section]
\newtheorem{proposition}[theorem]{Proposition}
\newtheorem{corollary}[theorem]{Corollary}
\newtheorem{lemma}[theorem]{Lemma}
\theoremstyle{definition}
\newtheorem{definition}[theorem]{Definition}
\newtheorem{example}[theorem]{Example}
\newtheorem{remark}[theorem]{Remark}
\newcommand{\Pow}{\mathcal P}
\newcommand{\Nrel}{\mathbf N}
\newcommand{\Ftwo}{\mathbb F_2}
\newcommand{\aff}{\operatorname{aff}}
\hypersetup{
  pdftitle={Finite bases for full power semirings of finite nilpotent semigroups},
  pdfauthor={Lili Wang, Qingrui Yin, Aifa Wang}}

\title[Full power semirings of finite nilpotent semigroups]
{Finite bases for full power semirings of finite nilpotent semigroups}
\author[L. Wang]{Lili Wang}
\address{School of Mathematical Sciences, Chongqing University of Technology,
Chongqing 400054, P.R. China}
\email{wllaf@cqut.edu.cn}
\author[Q. Yin]{Qingrui Yin}
\address{School of Mathematical Sciences, Chongqing University of Technology,
Chongqing 400054, P.R. China}
\email{13996559059@163.com}
\author[A. Wang]{Aifa Wang}
\address{School of Mathematical Sciences, Chongqing University of Technology,
Chongqing 400054, P.R. China}
\email{wangaf@cqut.edu.cn}
\subjclass[2020]{Primary 16Y60; Secondary 03C05, 08B05, 20M05}
\keywords{Full power semiring, finite equational basis, nilpotent semigroup,
finite duality, constraint satisfaction}
\date{}

\begin{document}
\begin{abstract}
We investigate finite equational bases for full power semirings of
finite semigroups, including the empty set, in the constant-free signature
with addition and multiplication. For a nontrivial finite nilpotent
semigroup, we associate a finite relational structure recording the ordered
products that are nonzero. We prove that the full power semiring is finitely
based if and only if this structure has finite duality, and relate this
condition to first-order definability and dismantling of the square of its
core. Quantitative bounds connect obstruction size with the number of
variables required in an identity basis. In the commutative case of
nilpotency index $d$, the criterion reduces to the existence of an element
with nonzero $(d-1)$st power. We also establish a nonfinite-basis obstruction
for semigroups with a two-element group ideal and prove a finite lifting
theorem. These results yield a direct-product criterion and a five-element
counterexample to sufficiency of the identity-fibre condition. The arguments
use equational logic, finite relational duality, and explicit algebraic
constructions.
\end{abstract}
\maketitle

\section{Introduction and main results}

An additively idempotent semiring (ai-semiring) is an algebra
$(A,+,\cdot)$ whose additive reduct is a commutative idempotent semigroup,
whose multiplicative reduct is a semigroup, and whose multiplication
distributes over addition on both sides.  If $S$ is a finite semigroup, its
\emph{full power semiring} is
\[
 \Pow(S)=\bigl(\{A:A\subseteq S\},+,\cdot\bigr),\qquad
 A+B=A\cup B,\quad AB=\{ab:a\in A,\ b\in B\}.
\]
The empty set is included throughout.  We use the constant-free binary
signature $(+,\cdot)$; neither the empty set nor a zero of $S$ is named by a
constant.  A finite basis means a finite set of valid identities from which
all valid identities follow by equational logic.  Equivalently, one may work
relative to the usual finite set of ai-semiring identities. We use the
standard notions of identity, variety, and equational consequence from
universal algebra; see Burris and Sankappanavar \cite{BS}.

The finite basis problem for power semirings is part of the study of
equational axiomatizability for ai-semirings. Dolinka \cite{D09} developed
sufficient conditions for inherent nonfinite basability, while Jackson,
Ren, and Zhao \cite{JRZ} investigated nonfinitely based ai-semirings whose
semigroup reducts are finitely based. These works also contain earlier
formulations of power-semiring finite-basis questions. The general problem
of characterizing finite semigroups with finitely based full power
semirings is posed again by Yue, Ren, and Gao in their preprint
\emph{Explicit equational bases for the power semirings of $S_7$}
(\href{https://arxiv.org/abs/2609.19957v1}{arXiv:2609.19957v1},
2026, Problem~1.1).

It is essential to distinguish the full power semiring from the nonempty
power semiring. For finite solvable non-Dedekind groups, Gusev and Volkov
proved nonfinite basability of the full power semiring \cite{GV}. The
group classification of Gao, Ren, Shao, and Yue in
\emph{The finite basis problem for the power semirings of finite groups}
(\href{https://arxiv.org/abs/2608.15793v1}{arXiv:2608.15793v1}, 2026)
concerns the nonempty version. Shao and Gao obtained the full-power
classification for finite groups, namely finite basis exactly in orders at
most two, in their preprint
\emph{Nonfinitely based intervals of power semiring varieties}
(\href{https://arxiv.org/abs/2609.26019v1}{arXiv:2609.26019v1},
2026, Corollary~1.3).

There is related work on nil-semigroups.  Wu and Zhao proved sufficient
nonfinite-basis conditions for nonempty power semirings of finite
nil-semigroups defined by commutative words
\cite{WZ}.  The presence of the empty set changes the reduction to local
constraints, so the full version cannot simply be read off from the nonempty
one. We give a direct argument for the full power semiring.

The relational language used here is the homomorphism formulation of
constraint satisfaction, developed systematically by Feder and Vardi
\cite{FV}. Homomorphism dualities for finite relational structures were
studied by Ne\v{s}et\v{r}il and Tardif \cite{NT}. Their connection with
first-order definability is due to Atserias \cite{Ats}, and the finite-tree
and dismantling characterizations that we use are presented by Larose,
Loten, and Tardif \cite{LLT}. In our setting the relations record precisely
which ordered short products are nonzero. The key step is to connect
homomorphism obstructions for these relations with absorption identities
under arbitrary subset assignments. Contextual closure then produces
finite quotient algebras witnessing failure of every proposed bound on
the number of variables in an identity basis.

We obtain a classification for all finite nilpotent semigroups, a simpler
criterion in the commutative case, and quantitative variable bounds. We
then extend the obstruction method to semigroups with a two-element group
ideal. A lifting theorem gives finite bases in a complementary situation
and yields the direct-product result below.

A semigroup with zero $0$ is \emph{nilpotent} if $S^d=\{0\}$ for some
positive integer $d$, where $S^d$ is the set of all products of $d$ elements.
Let $S\ne\{0\}$ be a finite nilpotent semigroup, and let $d\ge2$ be least
such that $S^d=\{0\}$.  Put $r=d-1$ and define the finite relational
structure
\begin{equation}\label{eq:N}
 \Nrel(S)=\bigl(S\setminus\{0\};R_1,\ldots,R_r\bigr),\qquad
 R_j=\{(a_1,\ldots,a_j):a_1\cdots a_j\ne0\}.
\end{equation}
The order of the factors is retained, repetitions are allowed, and
$R_1=S\setminus\{0\}$.

A finite relational structure $D$ has \emph{finite duality} if there are
finitely many finite structures $O_1,\ldots,O_t$ of the same signature such
that, for every finite structure $H$,
\[
 H\nrightarrow D\quad\Longleftrightarrow\quad
 (\exists i)\ O_i\to H.
\]
Here arrows denote homomorphisms preserving all relations; the obstruction
family is allowed to be empty.

\begin{theorem}[Nilpotent classification]\label{thm:nil}
Let $S\ne\{0\}$ be a finite nilpotent semigroup.  Then $\Pow(S)$ is finitely
based if and only if $\Nrel(S)$ has finite duality.  If finite duality fails,
then for every $k\ge1$ there is a finite ai-semiring satisfying all
identities of $\Pow(S)$ involving at most $k$ variables but failing another
identity of $\Pow(S)$.
\end{theorem}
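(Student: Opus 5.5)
The plan is to translate identities of $\Pow(S)$ into homomorphism statements about $\Nrel(S)$, and then use the known characterizations of finite duality. First we describe the equational theory. Every ai-semiring term is equivalent to a finite sum of words. An identity $u\approx v$ is equivalent to the pair of inequalities $u\le v$ and $v\le u$, where $u \le v$ means $u+v\approx v$. So it suffices to understand when a single word $w$ is absorbed by a sum $v=w_1+\dots+w_m$ in $\Pow(S)$. Because the empty set is allowed, words of length at least $d$ take only the values $\emptyset$ and $\{0\}$. Which of the two occurs is determined by which variables are assigned $\emptyset$, so these long words are governed by a finite, variable-content-type set of identities.

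For a word $w=x_{i_1}\cdots x_{i_n}$ we form a finite structure $H_w$. Its universe is the set of positions, or suitable letter occurrences. It carries the relation $R_j$ on every factor of length $j\le r$, that is, on each set of $j$ consecutive positions. An assignment of subsets together with a choice of elements realizing a nonzero product of $w$ then amounts to a homomorphism $H_w\to\Nrel(S)$ that respects the identifications of equal letters. The key lemma to prove is an equivalence. On one side, $\Pow(S)\models w\le w_1+\dots+w_m$. On the other, every homomorphism from the structure associated with $w$ to $\Nrel(S)$ factors compatibly through one of the $w_i$. Elements are chosen freely inside the subsets, so the condition is pointwise, and this is where the subset-assignment analysis enters.

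\emph{Sufficiency.} Assume finite duality with obstructions $O_1,\dots,O_t$. By Larose--Loten--Tardif these may be taken to be finite trees of bounded size. We encode each obstruction as a word $w_{O}$ and take as a basis the ai-semiring axioms together with the following families. The first consists of the identities $w_{O}\approx w_{O}^{\,d}$-type absorption identities stating that a product containing an obstruction pattern collapses to the ``zero-like'' value. The second consists of all valid identities in at most $N$ variables, where $N$ is bounded by $d$ times the maximum obstruction size. Derivability of an arbitrary valid identity then goes as follows. Given $w\le\sum w_i$, either the structure of $w$ admits a homomorphism to $\Nrel(S)$, in which case a bounded-window witness yields a short $w_i$ via the context rule. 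Or some $O_j$ maps into it, in which case the absorption identity applies inside a context. The argument is by induction on the length of $w$, replacing long words by their images under bounded local rewriting.

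\emph{Necessity and the $k$-variable statement.} If duality fails, Atserias' theorem gives, for each $k$, a structure $H_k\nrightarrow\Nrel(S)$ all of whose substructures on $\le k$ elements (indeed of bounded treewidth) do map. Equivalently, the square of the core fails to dismantle, which produces a long cycle-like obstruction. From $H_k$ we build a word $w_k$ realizing $H_k$ as its pattern; the identity $w_k\approx w_k+(\text{a zero-valued long word})$ then holds in $\Pow(S)$. We then construct a finite ai-semiring $A_k$: the quotient of the free ai-semiring on the letters of $w_k$, restricted to words of bounded length, by the fully invariant congruence generated by contextual closure of all $\le k$-variable identities of $\Pow(S)$. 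The main obstacle is verifying that this congruence does not identify $w_k$ with the absorbed term. We would show this by exhibiting a homomorphism-to-$\Nrel(S)$ certificate for every $k$-element substitution instance, a local consistency argument, which is exactly what the failure of $H_k\to\Nrel(S)$ combined with $k$-local satisfiability provides. Finiteness of $A_k$ follows since the words are bounded in length by nilpotency. This gives the second assertion, and since any finite basis uses boundedly many variables, it also gives the ``only if'' direction.
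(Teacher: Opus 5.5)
There is a genuine gap at the first step: you attach the relational structure to the wrong side of the absorption identity. If $|w|\ge d$, then under every subset assignment $w$ evaluates to $\{0\}$ or to $\varnothing$. Which one depends only on whether some variable of $c(w)$ is assigned $\varnothing$, and the consecutive factors of $w$ play no role. So your $H_w$ carries no information for long $w$. For short $w$ only fewer than $d$ variables are involved, and a finite library of identities suffices there.

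The constraints live in the \emph{right-hand sum}. Take a long $w$ and a sum $Q$ of short words with $c(Q)\subseteq c(w)$. Let $H_Q$ have the variables as vertices and one $R_j$-tuple $(i_1,\ldots,i_j)$ for each word $x_{i_1}\cdots x_{i_j}\in Q$. Then $\Pow(S)\models w\preceq Q$ if and only if $H_Q\nrightarrow\Nrel(S)$. In one direction, a homomorphism gives nonzero singletons that falsify the absorption. In the other, a falsifying assignment has every right-hand value avoiding $0$, and choosing one element per variable gives a homomorphism. It is the disjunction over the summands that encodes a relational instance; a single word cannot.

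This breaks both halves of your argument as written.

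\emph{Sufficiency.} Identities of type $w_O\approx w_O^{d}$ do not express that $O$ is an obstruction. What you need is $u_O\preceq\sum(\text{tuple words of }O)$, with $u_O$ long of support $V(O)$. You also need long-word normalization: long words of equal support are equal, and $u\preceq v$ for long $u,v$ with $c(v)\subseteq c(u)$. This normalization is also what reduces your ``all valid identities on $\le N$ variables'' to a finite set. Then an obstruction mapping into $H_Q$ yields a substitution instance whose right side lies in $Q$ and whose long left side absorbs $w$.

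\emph{Necessity.} Your target identity $w_k\approx w_k+u$, with $w_k$ a single word realizing $H_k$, cannot fail. Since $H_k$ has more than $d$ vertices, $w_k$ is long. The identity is then valid only if $c(w_k)\subseteq c(u)$, and in that case it follows from long-word identities on $d+1$ variables. So it holds in every model of the $k$-variable identities once $k\ge d+1$. The identity that must fail is $w_H\preceq Q_H$, with $w_H$ long on $V(H)$ and $Q_H$ the sum of the tuple words of $H$.

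\emph{The non-collapse step.} You leave this as ``a local consistency argument,'' but it is the heart of the proof. Work in short words on $V(H)$ with an absorbing overflow element $\infty$. You must show that $\infty$ never enters the contextual closure of the tuple words, which takes two ingredients. The first is an invariant: every short word in the closure has nonzero image under every homomorphism $H[C]\to\Nrel(S)$ with $C$ containing its support. The second is a first-overflow argument: discard variables outside the left support, and keep one longest short factor per remaining variable. This requires local satisfiability on $r(k+2)$ vertices, not $k$, because each of the $k$ variables and the two contexts contributes up to $r$ letters.
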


Finite duality for a finite relational structure is decidable by the
core-square dismantling criterion of Larose, Loten, and Tardif
\cite[Theorems~2.5 and 5.7]{LLT}.  The substantive point here is the
equivalence between that relational condition and the identity theory of the
full power semiring.

\begin{corollary}[Commutative case]\label{cor:comm}
If $S\ne\{0\}$ is finite and commutative with nilpotency index $d$, then
\[
 \Pow(S)\ \text{is finitely based}
 \quad\Longleftrightarrow\quad
 \exists a\in S\ \ (a^{d-1}\ne0).
\]
\end{corollary}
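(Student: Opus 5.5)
By Theorem~\ref{thm:nil}, it suffices to show that for commutative $S$ the structure $\Nrel(S)$ has finite duality exactly when some $a\in S$ satisfies $a^{d-1}\ne0$. Write $r=d-1$. We use the standard fact (Larose--Loten--Tardif) that finite duality is equivalent to the core of the structure having finite duality, and that this holds if and only if all obstructions can be taken to be finite relational trees.

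\emph{Sufficiency.} Suppose $a^{r}\ne0$. Then for every $j\le r$ we have $a^j\ne0$, since $a^{r}=a^j a^{r-j}$ when $j<r$. Hence every tuple $(a,\dots,a)$ lies in $R_j$, so the one-element substructure $\{a\}$ carries all loops. The constant map onto $a$ is therefore a homomorphism from any structure into $\Nrel(S)$. Consequently no finite $H$ satisfies $H\nrightarrow\Nrel(S)$, and the empty obstruction family witnesses finite duality.

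\emph{Necessity.} Suppose $a^{r}=0$ for every $a$. We show finite duality fails by producing loop-free obstructions of unbounded girth, which no finite family of tree-like obstructions can cover. Since $S^{r}\ne\{0\}$, pick $b_1,\dots,b_r$ with $b_1\cdots b_r\ne0$. For $n$ large, build a structure $H_n$ whose $R_r$-hyperedges form a long cycle of length $n$ in which consecutive tuples share elements, and whose smaller relations $R_j$ are added on all sub-multisets of each edge. Because $S$ is commutative, the relations are symmetric under permutations, and the product condition depends only on the multiset of entries. One tries to map $H_n$ into $\Nrel(S)$ using the $b_i$ in cyclic order.

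The key claim is that, for a suitable $n$, $H_n$ admits no homomorphism to $\Nrel(S)$, while every substructure of bounded size does. In particular every tree with boundedly many edges maps into $\Nrel(S)$, by greedily labelling along the tree with the $b_i$ and using the commutativity of $S$.

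\emph{Main obstacle.} The hard step is the non-homomorphism argument. I would use the fact that $\Nrel(S)$ has no element with all loops, together with a pigeonhole/compactness argument over long cycles: long $R_r$-cycles force some element to repeat $r$ times within a single tuple, and commutativity then gives a product $c^r\cdot(\ldots)=0$. If this direct construction proves awkward, the fallback is to invoke the dismantling criterion \cite[Theorem~5.7]{LLT}. Commutativity makes the square of the core symmetric, and the absence of a loop vertex prevents it from dismantling to its diagonal. This would show that the core of $\Nrel(S)$ is not of tree duality type, hence lacks finite duality, and Theorem~\ref{thm:nil} then gives that $\Pow(S)$ is not finitely based.
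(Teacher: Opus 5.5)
Your sufficiency direction is correct. The constant map to an $a$ with $a^{r}\ne0$ shows that every finite structure maps to $\Nrel(S)$, so the empty obstruction family gives finite duality and Theorem~\ref{thm:nil} applies. The paper cites Proposition~\ref{prop:maxpower} instead, which is the same argument unwound.

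The necessity direction has a genuine gap. Your key claim, that long $R_r$-cycles $H_n$ fail to map to $\Nrel(S)$, is false in general. Let $S=\{a,b,c,x,y,z,0\}$ with $ab=ba=x$, $bc=cb=y$, $ca=ac=z$ and all other products $0$. This $S$ is commutative of index $3$ with every square zero, so the corollary asserts nonfinite basis. But $R_2$ contains the symmetric triangle on $\{a,b,c\}$, and every cycle maps to a triangle, so your cycle family yields no obstructions at all for this $S$. The proposed pigeonhole justification is also wrong: a homomorphism from a cycle never needs a value repeated $r$ times inside one tuple. For $T$ in Example~\ref{ex:nonhereditary}, odd cycles fail to map for a parity reason, not by repetition. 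The dismantling fallback is only an assertion. ``No loop vertex prevents dismantling'' fails without commutativity (Example~\ref{ex:directed}), and you give no argument for how symmetry rescues it.

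The missing step is to aim the tree-obstruction theorem you already quoted at one specific non-mapping structure. Let $L$ be the one-vertex structure with $R_r(L)=\{(v,\ldots,v)\}$ and all other relations empty. Then $L\nrightarrow\Nrel(S)$, because every $s^r=0$. If $\Nrel(S)$ had finite duality, \cite[Theorem~2.5]{LLT} would give a relational tree $O$ with $O\to L$ and $O\nrightarrow\Nrel(S)$. Such an $O$ has only $R_r$-tuples, each with distinct coordinates, and each new tuple meets the earlier part in exactly one vertex. Your own greedy labelling by $b_1,\ldots,b_r$, permuting factors by commutativity, then maps $O$ to $\Nrel(S)$, a contradiction. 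This is the paper's argument, and it needs no explicit high-girth family.

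If you prefer to keep your local approach via Lemma~\ref{lem:bounded}, you must replace cycles by structures that map to $L$, fail to map to $\Nrel(S)$, and have girth exceeding any prescribed bound (a sparse-incomparability construction). That works, but it is considerably heavier machinery.
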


The theorem also gives quantitative information.  Let $b(D)$ be the least
$b\ge0$ such that every finite structure not mapping to $D$ has an induced
substructure with at most $b$ vertices that still does not map to $D$; put
$b(D)=0$ if every finite structure maps to $D$, and $b(D)=\infty$ if no such
finite bound exists.  Let $v(A)$ be the least possible maximum number of
variables in a finite relative identity basis for $A$, with $v(A)=\infty$
when no finite basis exists.  The fixed ai-semiring identities are not counted
in $v(A)$; the countermodels below satisfy them.  In the finitely based case,
we obtain

\begin{equation}\label{eq:bounds-intro}
 b(\Nrel(S))\le(d-1)\bigl(v(\Pow(S))+2\bigr),\qquad
 v(\Pow(S))\le\max\{d+1,b(\Nrel(S))\}.
\end{equation}

\begin{theorem}[Two-element group direct products]\label{thm:product}
For every finite commutative nilpotent semigroup $N$,
\[
 \Pow(N\times C_2)\ \text{is finitely based}
 \quad\Longleftrightarrow\quad
 \Pow(N)\ \text{is finitely based}.
\]
For arbitrary finite nilpotent $N$, finite basis of $\Pow(N\times C_2)$
implies finite basis of $\Pow(N)$.  If $N\ne\{0\}$ has nilpotency index $d$
and $a^{d-1}\ne0$ for some $a\in N$, then $\Pow(N\times C_2)$ is finitely
based without a commutativity assumption.
\end{theorem}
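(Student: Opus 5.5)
We prove Theorem~\ref{thm:product} by combining Theorem~\ref{thm:nil} and Corollary~\ref{cor:comm} with two further tools that the introduction announces. The first is a \emph{transfer} in the forward direction. Here $N$ embeds as the ideal-free retract $N\times\{1\}$ of $N\times C_2$, and the projection $N\times C_2\to N$ is a surjective homomorphism. Hence $\Pow(N)$ is both a subalgebra of $\Pow(N\times C_2)$ (via $A\mapsto A\times\{1\}$) and a homomorphic image of it (via the projection extended to subsets, which preserves unions and products). So $\Pow(N)$ lies in the variety generated by $\Pow(N\times C_2)$. This alone does not transfer finite bases, so we instead argue contrapositively. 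If $\Pow(N)$ is nonfinitely based, Theorem~\ref{thm:nil} gives, for every $k$, a finite ai-semiring $M_k$ satisfying all $k$-variable identities of $\Pow(N)$ but failing some identity $u_k\approx v_k$ of $\Pow(N)$. We would modify the construction, either by taking $M_k\times\Pow(C_2)$ or by rerunning the obstruction argument with the relational structure $\Nrel(N)$ now read off the nilpotent part of $N\times C_2$. The goal is a model of all $k$-variable identities of $\Pow(N\times C_2)$ that fails an identity of $\Pow(N\times C_2)$.

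The key point is that the obstruction identities in the proof of Theorem~\ref{thm:nil} are absorption identities of the form $x_1\cdots x_m + w \approx w$. They are built from homomorphism obstructions to $\Nrel(N)$ and have all words of length at most $d-1$ patterns. We would check two things for them. First, they remain valid in $\Pow(N\times C_2)$ once each variable occurrence is balanced so that the $C_2$-coordinates cancel; squaring variables or using even multiplicities should achieve this. Second, the contextual-closure quotients still witness failure. This gives ``finite basis of $\Pow(N\times C_2)$ implies finite basis of $\Pow(N)$'' for arbitrary nilpotent $N$.

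For the last assertion we use the \emph{finite lifting theorem} promised in the abstract. Suppose $a^{d-1}\neq0$ for some $a\in N$. Then $\Nrel(N)$ has a loop-like element: the constant tuple $(a,\dots,a)$ lies in every $R_j$. So every finite structure maps to $\Nrel(N)$, $b(\Nrel(N))=0$, and by \eqref{eq:bounds-intro} $\Pow(N)$ has a basis in at most $d+1$ variables. We would then lift this basis. Every element of $\Pow(N\times C_2)$ decomposes by its fibres over $C_2$, as a pair $(A_1,A_{-1})$ of subsets of $N$, with multiplication a twisted convolution. Consequently an identity holds in $\Pow(N\times C_2)$ iff a finite family of derived identities, obtained by substituting $x\mapsto x_1+x_{-1}$ with sign bookkeeping, holds in $\Pow(N)$. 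We would then show that the lifted basis, together with the finitely many identities of $\Pow(C_2)$ (finitely based since groups of order at most two are), derives every identity. This is done by reducing terms to a normal form: sums of words in which the element $a$ forces every word of length less than $d$ to survive. The parity of each word is then its only extra invariant.

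Finally, in the commutative case Corollary~\ref{cor:comm} says that $\Pow(N)$ finitely based forces $a^{d-1}\ne0$ for some $a$. The lifting result therefore gives the converse direction, and the equivalence follows. We expect the main obstacle to be the lifting step. Proving completeness of the lifted identities requires a careful normal form controlling the empty set, which annihilates products, together with the parity components; the forward transfer should be a routine adaptation of the nilpotent obstruction.
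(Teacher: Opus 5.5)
The gap is in the forward implication (finite basis of $\Pow(N\times C_2)$ implies finite basis of $\Pow(N)$). You call it routine, but it is the substantive half of the theorem, and both mechanisms you suggest fail.

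\emph{Balancing does not give valid identities.} Balancing multiplicities on the left does not make the obstruction identities valid in $\Pow(N\times C_2)$. The trouble is the short right-hand words: their $N$-coordinate can be $0$ while their $C_2$-coordinate is uncontrolled. For example, take $N=\{a,b,c,0\}$ with $ab=ba=c$ and $n$ odd. The identity $x_1^2\cdots x_n^2\preceq\sum_i x_ix_{i+1}$ (indices mod $n$) holds in $\Pow(N)$. In $\Pow(N\times C_2)$, consider the assignment $x_1\mapsto\{(a,g)\}$, $x_i\mapsto\{(a,e)\}$ for odd $i>1$, and $x_i\mapsto\{(b,e)\}$ for even $i$. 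The left side is $\{(0,e)\}$, while the only right word with first coordinate $0$ is $x_nx_1$, with value $\{(0,g)\}$. Squaring the right-hand letters instead makes those words long, and then they encode no constraint at all.

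Correction terms are therefore needed to absorb wrong-phase assignments. A long word in the $x_v$ alone takes the value $(0,e)$ whenever all $x_v$ have phase $e$, which is exactly where the constraint must bite. So the corrections need an extra reference variable: this is the $z$ with the terms $z^t$ and $z^tx_v$ in \eqref{eq:guard0}.

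\emph{The countermodels must change too.} Your $M_k\times\Pow(C_2)$ does satisfy all $k$-variable identities of $\Pow(N\times C_2)$. But it fails an identity of $\Pow(N\times C_2)$ only if $M_k$ does. In the quotient $M_k$ from the proof of Theorem~\ref{thm:nil}, every long word takes the value $c_k(\{\infty\})$ at nonempty arguments, so \eqref{eq:guard0} holds there because of the term $z^t$.

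The paper handles this in Theorem~\ref{thm:phase}, applied to the fibre $N\times\{e\}\cong N$ via Corollary~\ref{cor:fibre}. It builds an auxiliary semigroup in which overflowing short words land, with the correct phase, in a copy $\{\epsilon,\gamma\}$ of $C_2$. It closes $F\cup\{\bar\eta\}$ and proves that the forbidden element $\eta$ never enters. The locality bound becomes $2r(k+2)$, because a longest short factor of each phase must be kept.

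\emph{The lifting step.} For the last assertion your key idea is correct. Lifting phase-subset values along $e\mapsto(a,e)$, $g\mapsto(a,g)$ keeps every short word outside $\{0\}\times C_2$. Hence only the long part of the right side can absorb, and the question reduces to the affine criterion of Lemma~\ref{lem:affine}.

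The axioms you propose, however, are not valid in $\Pow(N\times C_2)$. The identity $x^3\approx x$ of $\Pow(C_2)$ fails at $x=\{(a,e)\}$, since $a^3\ne a$ for $a\ne0$ in a nilpotent semigroup, and $xy\approx yx$ fails for noncommutative $N$. The basis $L_d$ of $\Pow(N)$ does not lift either: $x_1\cdots x_d\approx x_1^2x_2\cdots x_d$ and $x_1\cdots x_dy\preceq x_1\cdots x_d$ both change the phase.

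What is needed are long-word versions, as in Theorem~\ref{thm:lift}:
\begin{enumerate}[label=(\alph*),nosep]
\item window transpositions together with \eqref{eq:Ed};
\item the triple absorption \eqref{eq:triple}, which yields $R^{2m+1}\preceq R$ for sums $R$ of long words and so realizes the odd affine combination;
\item a short-word library with one long representative for each support and parity vector.
\end{enumerate}
Your fibre decomposition $(A_e,A_g)$ is a correct semantic translation, but it does not transport equational derivations, so it cannot replace this completeness argument. With these pieces in place, the commutative equivalence follows from Corollary~\ref{cor:comm} as you say.
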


The final part of the paper proves a phase-sensitive nonfinite-basis
obstruction and applies it to a five-element semigroup.  These examples also
show that a semigroup whose full power semiring is finitely based can have
a subsemigroup and a Rees quotient whose full power semirings are nonfinitely
based.

\section{Absorption identities and contextual closure}

Let $X^+$ be the set of nonempty multiplicative words, let $|w|$ denote word
length, and let $c(w)$ denote the variable support of $w$.  By distributivity,
every term is a finite nonempty sum of words.  We write
\[
 w\preceq Q\qquad\Longleftrightarrow\qquad w+Q\approx Q,
\]
We identify a finite nonempty set of words with its idempotent sum and write
$c(Q)=\bigcup_{q\in Q}c(q)$.  Every identity is equivalent
to finitely many absorption identities, obtained by absorbing each word on
one side into the other side.

The following lemma is the full-power instance of Lemma~2.2 in the
Shao--Gao preprint \href{https://arxiv.org/abs/2609.26019v1}{arXiv:2609.26019v1}
mentioned in the introduction. We include the direct proof.

\begin{lemma}[Empty-set elimination]\label{lem:empty}
Suppose $\Pow(S)\models w\preceq Q$, and put $W=c(w)$.  Delete from $Q$ all
words using variables outside $W$, and call the resulting set $Q_W$.  Then
$Q_W\ne\varnothing$ and
\[
 \Pow(S)\models w\preceq Q_W.
\]
\end{lemma}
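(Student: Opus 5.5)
The plan is to specialise the identity $w\preceq Q$ to assignments that send every variable outside $W$ to the empty set. The point is that in $\Pow(S)$ the empty set is multiplicatively absorbing: $\varnothing\cdot A=A\cdot\varnothing=\varnothing$ for every $A\subseteq S$. Consequently a word containing some variable $x\notin W$ evaluates to $\varnothing$ whenever $x\mapsto\varnothing$. Words in $Q_W$ are unaffected by such a specialisation, since their variables all lie in $W$.

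First I show $Q_W\ne\varnothing$. Take the assignment $\varphi$ with $\varphi(x)=S$ for $x\in W$ and $\varphi(x)=\varnothing$ for $x\notin W$. Then $\varphi(w)=S^{|w|}$, which is nonempty because $S$ is a nonempty semigroup; note that $w$ uses only variables in $W$ by definition. Every word of $Q\setminus Q_W$ contains a variable outside $W$, so it evaluates to $\varnothing$. If $Q_W$ were empty, the right side would be $\varnothing$. Validity of $w+Q\approx Q$ would then force $S^{|w|}\subseteq\varnothing$, a contradiction.

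Next I prove $\Pow(S)\models w+Q_W\approx Q_W$. Let $\psi$ be an arbitrary assignment of subsets to the variables; only its values on $W$ matter for both sides of this identity. Define $\psi'$ to agree with $\psi$ on $W$ and to send every variable outside $W$ to $\varnothing$. Applying the valid identity $w\preceq Q$ at $\psi'$ gives $\psi'(w)\subseteq\psi'(Q)$. Here $\psi'(Q)=\bigcup_{q\in Q_W}\psi'(q)\cup\varnothing$. Since $\psi'$ and $\psi$ agree on $c(w)\cup c(Q_W)\subseteq W$, this yields $\psi(w)\subseteq\psi(Q_W)$. Because $\psi$ was arbitrary, the absorption identity holds in $\Pow(S)$. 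I should be careful that $Q_W$ is nonempty here, so it is a genuine term in the constant-free signature; this is exactly why the first step is needed.

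I do not expect a real obstacle. The only subtle point is the nonemptiness of $Q_W$, which relies on $S^{|w|}\neq\varnothing$ and on the inclusion of the empty set among the assignable values. I will also remark that the interpretation of $+$ as union makes $w\preceq Q$ equivalent to the containment $\varphi(w)\subseteq\varphi(Q)$ for all $\varphi$.
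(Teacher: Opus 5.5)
Your proof is correct and follows the paper's argument: specialise the variables outside $W$ to $\varnothing$ to obtain validity of $w\preceq Q_W$, and rule out $Q_W=\varnothing$ by giving the variables of $W$ nonempty values. The only differences are that you use $S$ itself where the paper uses arbitrary singletons, and that you prove nonemptiness first so that $Q_W$ is a genuine term; neither changes the argument.
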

\begin{proof}
Assign the variables outside $W$ the empty set.  This proves validity after
the deletion.  If no word remained, assign every variable in $W$ an arbitrary
nonempty singleton.  The left side is then nonempty while the right side is
empty, a contradiction.  The empty right side in this argument is only a
value of a term, not a constant in the language.
\end{proof}

Fix $A=\Pow(S)$, a finite auxiliary semigroup $U$, and $k\ge1$.  For every
valid absorption identity $w\preceq Q$ of $A$ involving at most $k$ variables,
every assignment of subsets $A_i\subseteq U$, and every pair of contexts
$u,v\in U^1$, impose the rule
\begin{equation}\label{eq:closure-rule}
 uQ(A_i)v\subseteq Z\quad\Longrightarrow\quad uw(A_i)v\subseteq Z .
\end{equation}
The formal identity in $U^1$ merely means that a context is absent on that
side.  Let $c_k(Z)$ be the least superset of $Z$ closed under all these
rules.  Since $U$ is finite, it is obtained by iterating the rules until
stability.

\begin{lemma}[The closure quotient]\label{lem:closure}
Write $c=c_k$.  Then
\begin{gather}
 uc(X)v\subseteq c(uXv),\qquad c(X)c(Y)\subseteq c(XY),\label{eq:nucleus}\\
 c(c(X)\cup c(Y))=c(X\cup Y),\qquad
 c(c(X)c(Y))=c(XY).\label{eq:quotient}
\end{gather}
The closed subsets of $U$ form a finite ai-semiring $B_k$ with operations
\[
 C\oplus D=c(C\cup D),\qquad C\otimes D=c(CD),
\]
and $X\mapsto c(X)$ is a surjective semiring homomorphism.  This quotient
satisfies every identity of $A$ involving at most $k$ variables.
\end{lemma}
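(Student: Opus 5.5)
The plan is to verify the four closure properties directly from the rule \eqref{eq:closure-rule}, and then read off the algebraic structure and the identities from them.

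\emph{First inclusion in \eqref{eq:nucleus}.} Fix $X$ and contexts $u,v\in U^1$, and let $Z=c(uXv)$. Consider
\[
\mathcal D=\{y\in U: uyv\subseteq Z\}.
\]
Then $X\subseteq\mathcal D$, so it suffices to show that $\mathcal D$ is closed under the rules.
\begin{itemize}[leftmargin=*]
\item Suppose $u'Q(A_i)v'\subseteq\mathcal D$.
\item Then $(uu')Q(A_i)(v'v)\subseteq Z$, and $uu',v'v\in U^1$ are again contexts.
\item Since $Z$ is closed, the rule with these composite contexts gives $(uu')w(A_i)(v'v)\subseteq Z$.
\item Hence $u'w(A_i)v'\subseteq\mathcal D$.
\end{itemize}
This is precisely why the contexts are allowed to range over all of $U^1$. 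Therefore $c(X)\subseteq\mathcal D$, i.e.\ $uc(X)v\subseteq c(uXv)$.

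\emph{Second inclusion in \eqref{eq:nucleus}.} Apply the first inclusion twice, elementwise, with one-sided contexts. For $y\in Y$, taking $v=y$ gives $c(X)y\subseteq c(Xy)\subseteq c(XY)$, so $c(X)Y\subseteq c(XY)$. For $x\in c(X)$, taking $u=x$ gives $x\,c(Y)\subseteq c(xY)\subseteq c(c(X)Y)\subseteq c(c(XY))=c(XY)$. Here monotonicity and idempotence of $c$ hold because $c$ is a closure operator: the least closed superset.

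\emph{The equalities \eqref{eq:quotient}.} Both are now routine.
\begin{itemize}[leftmargin=*]
\item For the union: $X\cup Y\subseteq c(X)\cup c(Y)\subseteq c(X\cup Y)$. Applying $c$ throughout gives equality.
\item For the product: $XY\subseteq c(X)c(Y)\subseteq c(XY)$ by \eqref{eq:nucleus}. Applying $c$ throughout gives equality.
\end{itemize}

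\emph{Structure of $B_k$.} From \eqref{eq:quotient}, the map $X\mapsto c(X)$ from $\Pow(U)$ onto the set of closed sets turns $\cup$ and setwise product into $\oplus$ and $\otimes$. It is surjective, since closed sets are fixed by $c$. The kernel is therefore a congruence of the ai-semiring $\Pow(U)$, and $B_k$ is its quotient, hence an ai-semiring. Associativity and distributivity transfer because $c$ is a homomorphism. Finiteness is clear, and the fixed ai-semiring identities hold in any quotient.

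\emph{Identities in at most $k$ variables.} This is the substantive step. Every identity is equivalent to finitely many absorption identities in the same variables, so it suffices to treat a valid absorption identity $w\preceq Q$ in at most $k$ variables.
\begin{itemize}[leftmargin=*]
\item Take closed sets $C_i$.
\item Because $c$ is a homomorphism and each $C_i=c(C_i)$, evaluating a term in $B_k$ at $(C_i)$ gives $c$ of its evaluation in $\Pow(U)$ at $(C_i)$.
\item Put $Z=c(Q(C_i))$, which is closed and contains $Q(C_i)$.
\item The rule with empty contexts gives $w(C_i)\subseteq Z$.
\item Hence $c(w(C_i))\subseteq Z$, i.e.\ $w\oplus Q=Q$ in $B_k$.
\end{itemize}

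The main obstacle is the first inclusion of \eqref{eq:nucleus}; everything else follows mechanically. The key device there is the auxiliary set $\mathcal D$ together with composition of contexts.
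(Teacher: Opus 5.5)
Your proof is correct and follows the paper's argument essentially step for step. The second inclusion, the two equalities, the homomorphic-image argument for $B_k$, and the empty-context verification of absorption identities all match the paper. The only difference is in how you prove $uc(X)v\subseteq c(uXv)$: you show that $\{y: uyv\in c(uXv)\}$ is closed and use minimality of $c(X)$, whereas the paper inducts on the length of a closure derivation, and both arguments rest on the same fact that the composite contexts $uu'$ and $v'v$ again lie in $U^1$.
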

\begin{proof}
If a closure derivation is given, adding fixed left and right contexts to
each step gives another instance of \eqref{eq:closure-rule}.  Induction on
the length of the derivation yields the first inclusion in
\eqref{eq:nucleus}.  Hence $c(X)Y\subseteq c(XY)$.  For $x\in c(X)$,
\[
 x c(Y)\subseteq c(xY)\subseteq c(c(X)Y)\subseteq c(XY),
\]
which gives the second inclusion.  The equalities in \eqref{eq:quotient}
follow from monotonicity, extensivity, and idempotence together with
\eqref{eq:nucleus}.  The quotient operations are therefore well defined,
and all ai-semiring identities pass through the surjective homomorphism.

For a valid absorption identity $w\preceq Q$ and closed-set values $C_i$,
the set $c(Q(C_i))$ contains the original premise $Q(C_i)$.  The rule with
empty contexts therefore puts $w(C_i)$ inside this closure; taking closure
once more gives the absorption identity in the quotient.  Decomposing an
identity into absorption identities proves the last assertion.
\end{proof}

\begin{remark}\label{rem:map}
If $h:U\to S$ is a semigroup homomorphism and $T\subseteq S$, then
$h^{-1}(T)$ is closed under all the rules above.  Indeed, $h$ induces a
homomorphism of full power semirings, and both the target absorption identity
and its left and right multiplicative contexts are valid.  No subsemigroup
assumption on $T$ is needed.
\end{remark}

\section{The nilpotent criterion}\label{sec:nil}

\subsection{Long-word normal forms and the finite short library}

Fix $S\ne\{0\}$ and its nilpotency index $d$, and write $r=d-1$.  A word of
length at least $d$ will be called \emph{long}; a word of length less than $d$
will be called \emph{short}.  A long word evaluates to $\{0\}$ if every
variable in its support has nonempty value, and to the empty set otherwise.
Consider the finite set
$L_d$ consisting of
\begin{align}
 x_1\cdots x_i x_{i+1}\cdots x_d
 &\approx x_1\cdots x_{i+1}x_i\cdots x_d
 &&(1\le i<d),\label{eq:Lswap}\\
 x_1\cdots x_d&\approx x_1^2x_2\cdots x_d,\label{eq:Ldup}\\
 x_1\cdots x_dy&\preceq x_1\cdots x_d.\label{eq:Ldrop}
\end{align}
These identities are valid in $\Pow(S)$, including assignments involving the
empty set.

\begin{lemma}\label{lem:normal}
The identities $L_d$ imply:
\begin{enumerate}[label=(\roman*),nosep]
\item any two long words with the same support are equal;
\item if $u$ and $v$ are long and $c(v)\subseteq c(u)$, then $u\preceq v$.
\end{enumerate}
\end{lemma}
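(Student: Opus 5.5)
The plan is to derive everything from substitution instances of $L_d$ with contexts, in three steps: commutation inside long words, a normal form for long words, then the drop rule for (ii).

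\emph{Step 1: long words commute.} Let $w=p\,a\,b\,q$ be a long word, where $a,b$ are letters and $p,q$ are possibly empty words. Since $|w|\ge d$, the word $pq$ has length at least $d-2$. Cut $p$ and $q$ into consecutive nonempty blocks so that the total number of blocks is exactly $d-2$. This is possible: $p$ can be cut into any number of blocks from $[1,|p|]$ (or $0$ if $p$ is empty), and similarly for $q$. Make $a$ and $b$ single blocks sitting in consecutive positions $i,i+1$. Substituting these $d$ blocks for $x_1,\dots,x_d$ in \eqref{eq:Lswap} gives $w\approx p\,b\,a\,q$. Hence any permutation of the letters of a long word yields an equal word.

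\emph{Step 2: normal form for (i).} The same substitution trick works for \eqref{eq:Ldup}. If a long word has length $n\ge d+1$ and a repeated letter $a$, permute it to $a\,a\,\rho$ with $|\rho|=n-2\ge d-1$. Cut $\rho$ into $d-1$ nonempty blocks. This gives $a\,a\,\rho\approx a\,\rho$, which lowers the length by one and keeps the word long. Conversely, any word $b\,\rho$ of length exactly $d$ can be raised to $b\,b\,\rho$.

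Let $s=|c(w)|$. Repeated lowering brings every long word to length $\max(d,s)$ with the same support. If $s\ge d$, all letters are now distinct, and Step 1 makes any two such words equal. If $s<d$, two words of length $d$ with the same support differ only in multiplicities. We can transfer one unit of multiplicity from a letter $a$ with multiplicity at least $2$ to a letter $b$ by two moves. First raise $b$, going from length $d$ to $d+1$. Then lower $a$, going back to $d$. Permutations are inserted as needed via Step 1. Iterating these transfers connects any two multiplicity vectors, which proves (i).

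\emph{Step 3: (ii).} Substituting a long word $v$ into \eqref{eq:Ldrop} gives $v\,y\preceq v$ for every word $y$. To do this, cut $v$ into $d$ nonempty blocks for $x_1,\dots,x_d$ and let $y$ be an arbitrary word. Take $y=u$. The word $vu$ is long and satisfies $c(vu)=c(u)\cup c(v)=c(u)$. So by (i), $u\approx vu\preceq v$, which gives $u\preceq v$.

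The only delicate point is the block-cutting bookkeeping in Steps 1 and 2. Each instance of $L_d$ needs exactly $d$ nonempty blocks, so every move must keep the word long. For this reason the multiplicity transfer is done as raise-then-lower rather than lower-then-raise.
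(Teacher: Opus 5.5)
Your Step 1 has a small counting error. You require all of $p$ and $q$ to be cut into exactly $d-2$ nonempty blocks, but each nonempty piece needs at least one block. So the cutting is impossible when $d=2$ and $pq$ is nonempty, and when $d=3$ and both $p$ and $q$ are nonempty. The repair is the one you announce in your first sentence, and it is what the paper does. Since $|w|\ge d$, take a window of exactly $d$ consecutive single letters containing $ab$ and substitute those letters into \eqref{eq:Lswap}. Then multiply both sides by the remaining prefix and suffix as contexts. With that fix, the rest of your argument is correct.

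Your route to (i) differs from the paper's. The paper uses \eqref{eq:Ldup} only left to right. It adds occurrences until both words reach the coordinatewise maximum of their multiplicity vectors, after which they are permutations of one another. You run \eqref{eq:Ldup} right to left to reach the canonical length $\max(d,|c(w)|)$. This is valid, since lowering from length $n\ge d+1$ leaves $|\rho|\ge d-1$ letters for the remaining $d-1$ blocks. But it forces the case split $s\ge d$ versus $s<d$ and the raise-then-lower transfer of multiplicity units, and the raise-only argument avoids all of that.

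For (ii), your derivation is slightly cleaner than the paper's. The paper adds occurrences to write $u\approx vt$ and must treat empty $t$ separately. You get $u\approx vu$ directly from (i), because $c(vu)=c(u)$. A single instance of \eqref{eq:Ldrop} with $y=u$ then gives $u+v\approx vu+v\approx v$.
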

\begin{proof}
Any adjacent pair in a long word occurs in some window of length $d$, so
\eqref{eq:Lswap} permits arbitrary permutations.  Move a chosen letter to the
first position of such a window and apply \eqref{eq:Ldup}; this adds one
occurrence.  For two words with the same support, raise each multiplicity to
the coordinatewise common upper bound and then permute, proving (i).

For (ii), add occurrences and permute to derive $u\approx vt$, where $t$
may be empty.  If $t$ is nonempty, split $v$ into $d$ nonempty blocks and
substitute $t$ for $y$ in \eqref{eq:Ldrop}; this gives $vt\preceq v$.
\end{proof}

On $r$ variables let $\mathcal W_r$ contain all words of length less than
$d$ and one long representative for every nonempty support.  Put
\begin{equation}\label{eq:Psi}
 \Psi_S=\left\{\,u\preceq\sum_{q\in Q}q:
       u\in\mathcal W_r,\ \varnothing\ne Q\subseteq\mathcal W_r,\
       \Pow(S)\models u\preceq\sum_{q\in Q}q\,\right\}.
\end{equation}
This is a finite set determined by the multiplication table of $S$.
Lemma~\ref{lem:empty} and Lemma~\ref{lem:normal} show that every valid
absorption identity with a short left word follows from $L_d\cup\Psi_S$.

\subsection{Long left words and finite duality}

For a nonempty set $Q$ of short words, let $H_Q$ be the relational structure
on a vertex set $W\supseteq c(Q)$ obtained by putting
$(i_1,\ldots,i_j)$ in $R_j(H_Q)$ for every word
$x_{i_1}\cdots x_{i_j}\in Q$ of length $j$.

\begin{lemma}\label{lem:csp}
Let $w$ be long, let $W=c(w)$, and suppose that every word in $Q$ is short
and uses only variables in $W$.  Then
\[
 \Pow(S)\models w\preceq Q
 \quad\Longleftrightarrow\quad
 H_Q\nrightarrow\Nrel(S).
\]
\end{lemma}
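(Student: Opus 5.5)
The plan is to evaluate both sides of $w\preceq Q$ under an arbitrary assignment of subsets $A_i\subseteq S$, $i\in W$. We may ignore variables outside $W$, since neither $w$ nor any word of $Q$ uses them. As recorded at the start of this subsection, the long word $w$ evaluates to $\varnothing$ if some $A_i$ with $i\in W$ is empty, and to $\{0\}$ otherwise. In the first case the absorption holds trivially. Hence
\[
 \Pow(S)\models w\preceq Q
 \iff
 0\in Q(A_i)\ \text{for every assignment of nonempty subsets } A_i\subseteq S,\ i\in W .
\]
The whole proof then reduces to translating the condition ``$0\notin Q(A_i)$ for some nonempty assignment'' into the existence of a homomorphism $H_Q\to\Nrel(S)$. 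Here $H_Q$ is taken on the vertex set $W$.

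First I show that a homomorphism yields a failure of the identity. Let $h:H_Q\to\Nrel(S)$ and assign the singletons $A_i=\{h(i)\}$, which are nonempty. Each word $q=x_{i_1}\cdots x_{i_j}\in Q$ then evaluates to the single element $h(i_1)\cdots h(i_j)$. Since $(i_1,\ldots,i_j)\in R_j(H_Q)$ and $h$ preserves $R_j$, this product is nonzero. The relation $R_j$ exists in the signature because $q$ is short, so $j\le d-1=r$. Thus $0\notin Q(A_i)$ while $w(A_i)=\{0\}$, and the identity fails.

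Conversely, suppose the identity fails. Then there is an assignment of nonempty sets $A_i$ with $0\notin Q(A_i)$. For $i\in c(Q)$, choose any $a_i\in A_i$. Then $a_i\neq0$: some $q\in Q$ contains $x_i$, and choosing $a_i$ at that occurrence (with arbitrary choices elsewhere) would put $0$ into $q(A_i)$. For $i\in W\setminus c(Q)$, let $a_i$ be any nonzero element, which exists because $S\ne\{0\}$; such vertices are isolated in $H_Q$. Define $h(i)=a_i$. For a tuple $(i_1,\ldots,i_j)\in R_j(H_Q)$ coming from $q\in Q$, choose the same element $a_{i}$ at every occurrence of a repeated variable. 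The product $a_{i_1}\cdots a_{i_j}$ is then one element of $q(A_i)\subseteq Q(A_i)$, so it is nonzero. Hence $h$ is a homomorphism $H_Q\to\Nrel(S)$.

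There is no serious obstacle here. The only points needing care are the following.
\begin{itemize}[nosep]
\item Repeated variables in a word are evaluated by independent choices from the same set. A consistent choice of one representative per variable is still one of these choices, so the converse direction goes through.
\item The vertices of $W$ outside $c(Q)$ must be mapped into $S\setminus\{0\}$, which is nonempty.
\item Only nonempty assignments on $W$ are relevant, and this is exactly where the long-word evaluation rule is used.
\end{itemize}
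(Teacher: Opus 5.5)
Your proof is correct and follows essentially the same route as the paper. Singleton values of a homomorphism refute the absorption. Conversely, a failing assignment, which is necessarily nonempty on $W$ with $0\notin Q(A_i)$, yields a homomorphism by picking one element per variable and arbitrary nonzero values at isolated vertices; you also spell out why consistent choices for repeated variables work and why each chosen element is nonzero, which the paper leaves implicit.
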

\begin{proof}
If a homomorphism $H_Q\to\Nrel(S)$ exists, assign its nonzero singleton
values to the variables.  Every right-hand word is nonzero, whereas the
left-hand word is zero, so absorption fails.
 
Conversely, assume that there is no homomorphism but that an assignment of
subsets violates the absorption identity.  The left side must be nonempty
and equal to $\{0\}$, so all left variables are nonempty and no right word
contains zero in its value.  Every variable occurring on the right therefore
has a value not containing $0$.  Choose one element from each such value
appearing on the right, and assign arbitrary nonzero elements to isolated
variables.  The chosen elements define a homomorphism
$H_Q\to\Nrel(S)$, a contradiction.
\end{proof}

\begin{proof}[Proof of Theorem~\ref{thm:nil}, sufficiency]
Let $O_1,\ldots,O_t$ be a finite obstruction family for $\Nrel(S)$.  Remove
isolated vertices from the obstructions; this does not change completeness
because $\Nrel(S)$ is nonempty.  Every remaining obstruction contains a
relation tuple.  Let $Q_{O_i}$ be the sum of its tuple words, and let $u_i$
be any long word whose support is exactly $V(O_i)$.  Lemma~\ref{lem:csp}
gives the valid identities
\begin{equation}\label{eq:obstacle-identity}
 u_i\preceq Q_{O_i}.
\end{equation}
We claim that $L_d\cup\Psi_S$ together with these finitely many identities
is a basis.

Let $w\preceq Q$ be valid.  First apply Lemma~\ref{lem:empty} to restrict
the right-hand support.  A short left word is handled by $\Psi_S$.  If a
long right-hand word $q$ occurs, Lemma~\ref{lem:normal}(ii) gives
$w\preceq q\preceq Q$.  We are left with a long left word and only short
right-hand words.  By Lemma~\ref{lem:csp}, $H_Q\nrightarrow\Nrel(S)$, so
some $O_i$ maps to $H_Q$.  Substitute the corresponding variables in
\eqref{eq:obstacle-identity}.  Its right side consists of words from $Q$,
while its left side is long and has support contained in $c(w)$.  Another
application of Lemma~\ref{lem:normal}(ii) absorbs $w$ into that left word
and then into $Q$.
\end{proof}

\subsection{Local short-word constructions}

\begin{lemma}[Bounded local obstructions]\label{lem:bounded}
A nonempty finite relational structure $D$ has finite duality if and only if
$b(D)<\infty$.  If finite duality fails, then for every $M$ there is a finite
structure $H\nrightarrow D$ all of whose induced substructures on at most
$M$ vertices map to $D$.
\end{lemma}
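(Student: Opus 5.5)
The plan is to argue directly from the definitions of finite duality and of $b(D)$, using two facts. First, homomorphisms compose. Second, the signature is finite (the relations $R_1,\ldots,R_r$ of bounded arity), so there are only finitely many isomorphism types of structures on at most $b$ vertices.

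For the forward direction, suppose $O_1,\ldots,O_t$ is a finite obstruction family for $D$, and put $b=\max_i|V(O_i)|$, or $b=0$ if $t=0$.
\begin{itemize}
\item Let $H\nrightarrow D$. By duality some $O_i$ admits a homomorphism $f:O_i\to H$.
\item Let $K$ be the induced substructure of $H$ on the image $f(V(O_i))$. Then $|V(K)|\le|V(O_i)|\le b$.
\item Since $f$ also maps $O_i$ into $K$, a homomorphism $K\to D$ would compose with $f$ to give $O_i\to D$.
\item That is impossible, because $O_i\to O_i$ and duality force $O_i\nrightarrow D$. Hence $K\nrightarrow D$, and so $b(D)\le b<\infty$.
\end{itemize}
If $t=0$, every finite structure maps to $D$, and $b(D)=0$ by convention.

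For the converse, assume $b=b(D)<\infty$. Take as obstruction family a set of representatives of all isomorphism types of structures on at most $b$ vertices that do not map to $D$; this is a finite family by the finiteness of the signature.
\begin{itemize}
\item If $H\nrightarrow D$, the definition of $b(D)$ gives an induced substructure $K$ of $H$ with at most $b$ vertices and $K\nrightarrow D$. Then $K$ is isomorphic to some $O_i$, and the inclusion $K\hookrightarrow H$ yields $O_i\to H$.
\item Conversely, if $O_i\to H$ and $H\to D$, then by composition $O_i\to D$, contradicting the choice of $O_i$.
\end{itemize}
So this family witnesses finite duality. The nonemptiness of $D$ only ensures that the empty structure, and hence any structure with no relation tuples, maps to $D$. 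Consequently an empty family is consistent with the case $b(D)=0$, and no degenerate obstruction arises.

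The second assertion is the negation of the definition of $b(D)<\infty$. If finite duality fails, the equivalence just proved gives $b(D)=\infty$. Thus no $M$ serves as a bound, and for each $M$ there is a finite $H\nrightarrow D$ such that every induced substructure on at most $M$ vertices maps to $D$.

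I expect no real obstacle. The only points needing care are passing to the image, which is an induced substructure, so that the size bound matches the definition of $b$, and using the finiteness of the signature for the converse.
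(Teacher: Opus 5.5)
Your proposal is correct and follows essentially the same route as the paper. For the forward direction you take the induced substructure on the image of an obstruction. For the converse you take the finitely many non-mapping isomorphism types on at most $b(D)$ vertices, and you explicitly check the reverse implication $O_i\to H\Rightarrow H\nrightarrow D$, which the paper leaves implicit.
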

\begin{proof}
If each member of the obstruction family has at most $M$ vertices, map an obstruction into
$H$ and take the induced substructure on its image.  It has at most $M$
vertices and cannot map to $D$, since otherwise the obstruction would map to
$D$.  Conversely, for a fixed finite signature there are only finitely many
isomorphism types on at most $M$ vertices.  Taking all types that do not map
to $D$ gives a finite obstruction family.
\end{proof}

Fix $k\ge1$.  Suppose that
$H\nrightarrow\Nrel(S)$ but every induced substructure on at most
$M=r(k+2)$ vertices maps to $\Nrel(S)$.  Write $V=V(H)$; then
$|V|>M\ge d$.  Define
\[
 U_r(V)=\{\text{ordered words on $V$ of lengths }1,\ldots,r\}
 \ \dot\cup\ \{\infty\},
\]
where concatenation of short words of total length greater than $r$ is
the absorbing element $\infty$.  Let $F$ be the set of tuple words of $H$,
and form the contextual closure $c_k(F)$.

\begin{lemma}[Invariant before the first overflow]\label{lem:localnil}
Before $\infty$ first appears in a closure derivation from $F$, every
short word $f$ already present has the following property: for every
$C\supseteq c(f)$ and every homomorphism
$\theta:H[C]\to\Nrel(S)$, evaluation of the letters of $f$ under $\theta$
is nonzero in $S$.
\end{lemma}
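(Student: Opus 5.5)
We argue by induction on the length of a closure derivation from $F$, stopping at the first step that produces $\infty$.

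\emph{Base case.} A word $f\in F$ is a tuple word $x_{i_1}\cdots x_{i_j}$ with $(i_1,\ldots,i_j)\in R_j(H)$. If $C\supseteq c(f)$ and $\theta:H[C]\to\Nrel(S)$, then the tuple lies in $R_j(H[C])$. Hence $(\theta i_1,\ldots,\theta i_j)\in R_j$, so the product is nonzero.

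\emph{Inductive step.} Consider one application of \eqref{eq:closure-rule}. We have a valid absorption identity $w\preceq Q$ of $\Pow(S)$ on at most $k$ variables, subsets $A_i\subseteq U_r(V)$, and contexts $u,v\in U_r(V)^1$. The hypothesis is $uQ(A_i)v\subseteq Z$, where $Z$ is the current set, and a new word $f\in uw(A_i)v$ is added. We may assume $\infty\notin uQ(A_i)v$: otherwise $\infty$ already lies in $Z$, contradicting the choice of the stage. Since $f\neq\infty$, we can write $f=u\,a_1\cdots a_m\,v$, where $w=x_{\ell_1}\cdots x_{\ell_m}$ and $a_s\in A_{\ell_s}$ is a short word. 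Every variable of $w$ therefore has a nonempty value. Moreover $|u|+\sum|a_s|+|v|\le r$, so $|c(f)|\le r$.

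Now fix $C\supseteq c(f)$ and $\theta:H[C]\to\Nrel(S)$, and suppose the evaluation of $f$ is zero. We build an assignment in $\Pow(S)$ that violates $w\preceq Q$. The idea is to assign to each variable $x_i$ a subset $B_i\subseteq S$ and to enlarge $C$ to a set $C'$ of size at most $M=r(k+2)$, with a homomorphism $\theta'$ on $C'$. The target is $(\theta u)\,w(B_i)\,(\theta v)\ni 0$, while every word of $uQ(A_i)v$ evaluates under $\theta'$ to a nonzero element by induction. By Lemma~\ref{lem:empty} we may assume every word of $Q$ uses only variables of $w$.

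For each variable $x_i$ of $w$, pick one representative word $a^{(i)}\in A_i$; take those actually used in $f$ when $x_i$ occurs in $w$. Let $C'=C\cup c(u)\cup c(v)\cup\bigcup_i c(a^{(i)})$. This has at most $|C|+r+\cdots$ vertices. To keep the bound we take $C=c(f)$ minimal, which suffices since the property for larger $C$ follows by restricting $\theta$. Then $|C'|\le r+kr+r=r(k+2)=M$. By hypothesis, $H[C']$ admits some homomorphism to $\Nrel(S)$. The difficulty is that we need one agreeing with $\theta$ on $c(f)$. I would instead choose $C'$ first and use an arbitrary $\theta'$ on $H[C']$, deriving a contradiction from the claim applied to $C'$ together with its restriction.

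Concretely, set $B_i=\{\theta'(a^{(i)})\}$, a singleton, so $w$ evaluates under $\theta'$ in the contexts to the value of $f'=u\,a^{(\ell_1)}\cdots a^{(\ell_m)}\,v$. Each word $q=x_{p_1}\cdots x_{p_n}\in Q$ evaluates in the contexts to $\theta'$ of $u\,a^{(p_1)}\cdots a^{(p_n)}\,v$. This word lies in $uQ(A_i)v\subseteq Z$, so it is a short word, and by the inductive invariant with $C'$ its value is nonzero. Validity of $w\preceq Q$, multiplied by the fixed contexts, then forces the value of $f'$ to lie in a union of nonzero values, so it is nonzero. This shows that every homomorphism on any $C'\supseteq c(f')$ of bounded size makes $f'$ nonzero. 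Choosing the representatives $a^{(i)}$ to be exactly the letters of $f$ gives $f'=f$.

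\emph{Main obstacle.} It remains to pass from some $\theta'$ to the given $\theta$ on an arbitrary $C\supseteq c(f)$. The fix is to run the argument with $C'$ replaced by $C\cup C'$. No size bound is needed there, since the inductive hypothesis quantifies over all $C$, so we take $\theta'$ to be $\theta$ restricted and extended. Extension is unnecessary: the invariant only requires homomorphisms on sets containing the supports, and $\theta$ is given on $C$. If $C$ does not contain the supports of the $a^{(i)}$, we use $C=c(f)\supseteq$ those supports, since the chosen $a^{(i)}$ are subwords of $f$. With this choice no size bound is needed at all, and the bound $M$ is reserved for the later overflow lemma.
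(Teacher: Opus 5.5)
Your skeleton matches the paper's. You induct along the derivation and note that no premise word is $\infty$. You restrict to the factors that actually realize $f=u\,a_1\cdots a_m\,v$, so every relevant letter lies in $c(f)\subseteq C$. You evaluate with the \emph{given} $\theta$ and conclude from validity of $w\preceq Q$ with the contexts attached. The detour through a bounded set $C'$ and an arbitrary homomorphism $\theta'$ on $H[C']$ should be dropped: such a $\theta'$ says nothing about the given $\theta$, and, as you eventually notice, no size bound is needed here.

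The genuine gap is your choice of \emph{singleton} values $B_i=\{\theta(a^{(i)})\}$, one representative per variable. When a variable occurs several times in $w$, the factors selected at its different occurrences need not coincide. For instance, take $w=x^2$, distinct letters $p,q\in A_x$, and $f=u\,pq\,v$. No choice of one representative per variable then reproduces $f$. Your computation proves $\theta(f')\neq0$ for $f'=u\,pp\,v$ or $f'=u\,qq\,v$, and says nothing about $f$. The sentence ``choosing the representatives $a^{(i)}$ to be exactly the letters of $f$ gives $f'=f$'' is false in precisely this situation. So the inductive step is established only for closure steps in which each variable is realized by a single factor.

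The missing device is the one the paper uses. For each variable $x_i$ of $w$, let $A_i'\subseteq A_i$ be the \emph{set} of all factors selected at the occurrences of $x_i$ in the realization of $f$, and give every other variable the empty set. Push this assignment through the semigroup homomorphism $h_\theta:U_r(C)\to S$ with $h_\theta(\infty)=0$, which is well defined since $S^{r+1}=\{0\}$; that is, take $B_i=h_\theta(A_i')$. Then $h_\theta(f)\in h_\theta(u)\,w(B)\,h_\theta(v)$. On the other side, $h_\theta(u)\,Q(B)\,h_\theta(v)=h_\theta\bigl(uQ(A')v\bigr)$ and $uQ(A')v\subseteq uQ(A)v\subseteq Z$. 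Every word of $uQ(A')v$ is short because $\infty\notin Z$, and its support lies in $c(f)\subseteq C$. The induction hypothesis with the same $C$ and $\theta$ therefore makes all these values nonzero. Validity of the contextual absorption identity in $\Pow(S)$ then forces $h_\theta(f)\neq0$.
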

\begin{proof}
The map $\theta$ extends to a semigroup homomorphism
\[
 h_\theta:U_r(C)\to S,\qquad h_\theta(\infty)=0,
\]
because $S^{r+1}=\{0\}$.  Tuple words in $F$ have nonzero images.
Suppose a closure step adds a short word $f$.  Choose a factor selection
in a realization of $uw(A)v$ that produces $f$.  For each variable retain
the set of all short factors selected at its occurrences, and assign the
variables outside the left support the empty set.  All remaining letters
lie in $c(f)\subseteq C$.  The new premise is a subset of the old premise,
contains no $\infty$, and consists of words whose supports lie in $C$.
The induction hypothesis says that every such premise word has nonzero
$h_\theta$-image.  Validity of the target absorption identity, including
the chosen contexts, then forces $h_\theta(f)\ne0$.
\end{proof}

\begin{lemma}\label{lem:nooverflow}
Under the preceding hypotheses, $\infty\notin c_k(F)$.
\end{lemma}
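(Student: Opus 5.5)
Suppose, toward a contradiction, that $\infty\in c_k(F)$. The plan is to study the first derivation step that adds $\infty$ and to show that this step is impossible, using the local hypothesis on $H$ together with the invariant of Lemma~\ref{lem:localnil}. At that step we have a valid absorption identity $w\preceq Q$ in at most $k$ variables, subsets $A_i\subseteq U_r(V)$, and contexts $u,v\in U_r(V)^1$. The current closed set $Z$ does not contain $\infty$, yet $uQ(A_i)v\subseteq Z$ while $\infty\in uw(A_i)v$. By Lemma~\ref{lem:empty} we may assume every variable of $Q$ lies in $c(w)$.

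Fix a realization of $\infty$ in $uw(A_i)v$. It consists of one element $a_{i,t}\in A_i$ for each occurrence of each variable, and the resulting product either contains $\infty$ or overflows length $r$. Now shrink the assignment, as in the proof of Lemma~\ref{lem:localnil}: replace each $A_i$ by the set $A_i'$ of factors actually chosen at its occurrences. This set has at most $|w|$ elements. The premise shrinks, so $uQ(A'_i)v\subseteq Z$ still holds, and every word in it is short.

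Next we bound the set $C$ of letters involved. We will not keep all the chosen factors; we only need enough to witness overflow. If some chosen factor is $\infty$ itself, then $\infty\in A_i'$. Any right-hand word containing $x_i$ would then produce $\infty$ in the premise, which is impossible because $\infty\notin Z$. Nor can $x_i$ be absent from $Q$: otherwise we could assign $\{\infty\}$ to $x_i$ and the empty set elsewhere, which would contradict validity after mapping through Remark~\ref{rem:map}. Otherwise, the overflow is witnessed by a prefix of total length $r+1$ of $u\cdot(\text{factors})\cdot v$. We keep only the variables and context letters meeting this prefix. The remaining variables we reassign to single short letters, or handle via the long-word normal form. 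The supporting letters then number at most $r(k+2)$: each of the $k$ variables contributes at most $r$ letters through its retained factors, and each of $u$ and $v$ contributes at most $r$. Call this set $C$; since $|C|\le M$, the hypothesis gives a homomorphism $\theta:H[C]\to\Nrel(S)$.

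With $\theta$ in hand, extend it to $h_\theta:U_r(C)\to S$ with $\infty\mapsto0$. By Lemma~\ref{lem:localnil}, applied before the first overflow, every premise word (all supported in $C$) has nonzero image. So the image of the premise under $h_\theta$ avoids $0$, while the image of $uw(A'_i)v$ contains $h_\theta(\infty)=0$. Transport this through the semiring homomorphism $\Pow(h_\theta)$ of Remark~\ref{rem:map}, with contexts $h_\theta(u),h_\theta(v)$. This yields an assignment in $\Pow(S)$ violating $w\preceq Q$ inside context, and since contexts preserve valid absorptions this contradicts validity.

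The main obstacle is the support bound. Each $A_i'$ may contain up to $|w|$ distinct words, which is unbounded, so a naive count of $C$ fails. My first attempt is to keep, for each variable, only the factor chosen at occurrences within the overflowing prefix of length $r+1$. That prefix meets at most $r+1$ factor positions, so at most $r$ letters come from the prefix and at most $r$ from each context. Variables not meeting the prefix I would assign the set of all their chosen factors, and I would only need $\theta$ on premise words. If that fails, I will instead argue that premise words involving distant letters can be ignored, because $h_\theta$ need only be defined on letters where nonzeroness is tested. This is the reason $M=r(k+2)$ is chosen.
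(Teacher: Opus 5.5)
Your overall frame is the paper's: take the first step that adds $\infty$, shrink values to subsets so the premise stays in $Z$, find $C$ with $|C|\le r(k+2)$, then apply Lemma~\ref{lem:localnil} with $\infty\mapsto0$. The gap is the step that actually bounds $C$. None of the versions you offer works:
\begin{itemize}
\item Keeping all chosen factors of a variable is unbounded, because identities on $k$ variables can have arbitrarily long left words. Your count ``at most $r$ letters per variable'' silently assumes one factor per variable.
\item Reassigning variables to ``single short letters'' that are not elements of their current values loses $uQ(A_i)v\subseteq Z$. Lemma~\ref{lem:localnil} then says nothing about the new premise words.
\item Ignoring premise words with distant letters is not allowed. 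The contradiction needs every premise output to avoid $0$, and Lemma~\ref{lem:localnil} only controls words whose support lies in a set $C$ on which $\theta$ is a homomorphism $H[C]\to\Nrel(S)$.
\end{itemize}

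The missing idea is the paper's uniform singleton reduction. Give each left variable one element of its current value: $\infty$ if it occurs there, and otherwise a short factor of maximum length.
\begin{itemize}
\item This is a subset of the old value, so all premise outputs remain short words of $Z$.
\item Each occurrence now carries a factor at least as long as in the original realization, or the absorbing $\infty$, so the left side is still $\infty$.
\item Each variable contributes at most $r$ letters and the contexts at most $2r$. Neither context can be $\infty$, because the nonempty right side would then put $\infty$ into the premise.
\end{itemize}
Your prefix idea can be repaired in the same spirit. Keep the factors at prefix positions, and give every variable not meeting the prefix one element of its value instead of all its chosen factors. That also stays within $r(k+2)$.

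Separately, your dismissal of the case where a chosen factor is $\infty$ is incorrect. A left variable can be absent from an all-short right side. For the semigroup $T$ of Example~\ref{ex:nonhereditary} every square is zero, so $x^3y\preceq x^2$ is valid in $\Pow(T)$. Assigning $\{\infty\}$ to $x_i$ and $\varnothing$ to the other variables empties the left side whenever $w$ has other variables, so it gives no contradiction, and Remark~\ref{rem:map} does not supply one. This case is therefore possible, but it is absorbed into the general argument: the value $\{\infty\}$ contributes no letters to $C$, and $h_\theta(\infty)=0$ still sends the left side to $0$.
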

\begin{proof}
Assume that a first closure step adds $\infty$, and let $Z$ be the set
before that step.  The left variables have nonempty values.  Set variables
outside the left support to the empty set.  By Lemma~\ref{lem:empty}, the
right side still has a word; hence an actual context equal to $\infty$ would
already put $\infty$ in the premise, which is impossible.
 
For each left variable choose one factor: retain $\infty$ if it occurs, and
otherwise choose a short factor of maximum length.  The resulting
singleton choices still make the left side equal to $\infty$: an existing
absorbing factor is retained, and otherwise the total length does not
decrease.  Every premise output is still a short word in $Z$.  The union
$C$ of the selected finite letters and the context letters has size at most
$r(k+2)$.  Choose a homomorphism $\theta:H[C]\to\Nrel(S)$.  By
Lemma~\ref{lem:localnil}, all premise outputs have nonzero image, whereas
the left side, which is $\infty$, maps to zero.  This contradicts validity
of the absorption identity with its contexts.
\end{proof}

\begin{proof}[Proof of Theorem~\ref{thm:nil}, necessity]
If finite duality fails, Lemma~\ref{lem:bounded} supplies such an $H$ for
every $k$.  Let $Q_H$ be the sum of the tuple words and let $w_H$ be a word
containing every vertex exactly once.  Since $|V|>M\ge d$, Lemma~\ref{lem:csp}
gives the valid identity $w_H\preceq Q_H$.
 
In the closure quotient $B_k$ of Lemma~\ref{lem:closure}, assign
$x_v$ the closed singleton $c_k(\{v\})$.  The left side of this identity is
$c_k(\{\infty\})$, while the right side is $c_k(F)$.  Lemma~\ref{lem:nooverflow}
shows that the identity fails in $B_k$; Lemma~\ref{lem:closure} shows that
$B_k$ satisfies all target identities involving at most $k$ variables.
Thus a finite basis, with $k$ at least its maximum number of variables,
cannot exist.
\end{proof}

For the one-point zero semigroup, $\Pow(\{0\})$ has two elements.  Relative
to the ai-semiring identities, one may take
\[
 xy\approx yx,\qquad x^2\approx x,\qquad xy\preceq x
\]
as a basis; completeness follows directly from support inclusion.

\section{Variable bounds, relational obstructions, and commutativity}
\label{sec:bounds}

\subsection{Quantitative finite-basis constraints}

\begin{theorem}[Obstruction size and variables]\label{thm:quant}
Let $S\ne\{0\}$ be finite nilpotent of index $d$, and put $r=d-1$.
If the identity theory of $\Pow(S)$ is generated, possibly by an infinite
set, by identities involving at most $k$ variables, then
\[
 b(\Nrel(S))\le r(k+2).
\]
Conversely, if $b=b(\Nrel(S))<\infty$, then $\Pow(S)$ has a finite relative
basis using at most $\max\{d+1,b\}$ variables.
\end{theorem}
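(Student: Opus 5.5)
The plan is to prove the two inequalities separately. Each is a quantitative rereading of one half of the proof of Theorem~\ref{thm:nil}.

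\emph{First inequality.} Suppose the identity theory of $\Pow(S)$ is generated by identities in at most $k$ variables, and assume for contradiction that $b(\Nrel(S))>r(k+2)=M$. Then, by definition of $b$, there is a finite structure $H\nrightarrow\Nrel(S)$ all of whose induced substructures on at most $M$ vertices map to $\Nrel(S)$. We may pass to a minimal such $H$ if convenient; Lemma~\ref{lem:bounded} is not needed, since the definition of $b$ supplies $H$ directly. The necessity argument now runs verbatim with this $H$:
\begin{itemize}
\item form $U_r(V)$, the tuple-word set $F$, and the closure $c_k(F)$;
\item Lemma~\ref{lem:nooverflow} gives $\infty\notin c_k(F)$, which uses exactly the hypothesis on induced substructures of size $\le r(k+2)$;
\item the closure quotient $B_k$ of Lemma~\ref{lem:closure} satisfies every identity of $\Pow(S)$ in at most $k$ variables, hence, by equational logic, every consequence of them, which is the whole theory;
\item the valid identity $w_H\preceq Q_H$, valid by Lemma~\ref{lem:csp} since $|V|>M\ge d$, fails in $B_k$ under the singleton assignment.
\end{itemize}
This contradiction gives $b\le r(k+2)$. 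The point to state carefully is that the generating set may be infinite; the closure construction only uses that each generator has at most $k$ variables, so this causes no difficulty. The formula is vacuous when $b=0$.

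\emph{Second inequality.} Suppose $b<\infty$. Take as obstruction family all isomorphism types of structures on at most $b$ vertices not mapping to $\Nrel(S)$, as in Lemma~\ref{lem:bounded}; this family is complete by definition of $b$. Remove isolated vertices as in the sufficiency proof, and then use the basis $L_d\cup\Psi_S\cup\{u_i\preceq Q_{O_i}\}$ from the proof of Theorem~\ref{thm:nil}. We count variables:
\begin{itemize}
\item the identities \eqref{eq:Lswap} and \eqref{eq:Ldup} use $d$ variables, and \eqref{eq:Ldrop} uses $d+1$;
\item $\Psi_S$ uses $r=d-1$ variables;
\item each obstruction identity uses exactly $|V(O_i)|\le b$ variables, provided $u_i$ is chosen with support exactly $V(O_i)$.
\end{itemize}
If some $|V(O_i)|<d$, a long word on that support still exists, by repeating letters, so nothing changes. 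Completeness is the sufficiency argument unchanged. Hence the basis uses at most $\max\{d+1,b\}$ variables.

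The main obstacle is only bookkeeping in the first part. One must confirm that the closure $c_k$ and Lemma~\ref{lem:nooverflow} need only the local hypothesis with bound $r(k+2)$, and not failure of finite duality globally; this holds because those lemmas were stated for a single $H$ with that hypothesis. One must also confirm that $B_k$ validating all $\le k$-variable identities of $\Pow(S)$ suffices when the theory is generated by such identities.
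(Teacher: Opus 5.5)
Your proposal is correct and follows the paper's proof essentially step for step. For the first bound, you feed the single locally satisfiable $H$ supplied by $b(\Nrel(S))>r(k+2)$ into the closure construction of Lemmas~\ref{lem:closure}--\ref{lem:nooverflow}. For the converse, you take all non-mapping structures on at most $b$ vertices as the obstruction family and count variables in $L_d\cup\Psi_S$ and the obstruction identities, using repeated letters to make the left words long. Your extra checks ($|V|>M\ge d$, the case $b=0$, and that an infinite generating set is harmless) are accurate and make explicit what the paper leaves implicit.
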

\begin{proof}
If the first bound failed, there would be an $H\nrightarrow\Nrel(S)$ all of
whose induced substructures on at most $r(k+2)$ vertices map to $\Nrel(S)$.
The closure construction in the proof of Theorem~\ref{thm:nil} uses only
this one $H$ and produces an ai-semiring satisfying all valid identities on
at most $k$ variables but failing another valid identity, contradicting the
generation hypothesis.
 
Conversely, take all non-mapping structures on at most $b$ vertices as a
finite obstruction family.  The long-word identities $L_d$ use at most
$d+1$ variables, $\Psi_S$ uses at most $r$, and each obstruction identity
uses at most $b$ variables.  Repetitions of existing variables make the left
word long, so no new variables are needed.
\end{proof}

In particular, if every induced substructure of an $H$ on at most
$r(k+2)$ vertices maps to $\Nrel(S)$ while $H$ itself does not, then the
corresponding long-left/short-right absorption identity cannot follow from
the target identities on at most $k$ variables.  It rules out every equational derivation from the valid identities
on at most $k$ variables.

\subsection{A noncommutative sufficient condition}

\begin{proposition}\label{prop:maxpower}
Let $S\ne\{0\}$ be finite nilpotent of index $d$.  If some
$a^{d-1}\ne0$, then $\Pow(S)$ is finitely based, without a commutativity
assumption.  In fact $L_d\cup\Psi_S$ is a finite relative basis.
\end{proposition}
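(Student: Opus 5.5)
The plan is to show that under the hypothesis the structure $\Nrel(S)$ admits the \emph{empty} obstruction family. The sufficiency half of Theorem~\ref{thm:nil} then needs no obstruction identities \eqref{eq:obstacle-identity}, so $L_d\cup\Psi_S$ alone is a basis.

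\emph{Step 1: every finite structure maps to $\Nrel(S)$.} Fix $a$ with $a^{d-1}\ne0$. For $1\le j\le r=d-1$ we have $a^j\ne0$, since $a^{d-1}=a^j a^{d-1-j}$ (or $a^{d-1}=a^j$ when $j=d-1$), and a factor equal to $0$ would force the product to be $0$. Let $H$ be any finite structure in the signature $R_1,\ldots,R_r$. The constant map $V(H)\to\{a\}\subseteq S\setminus\{0\}$ sends every tuple of $R_j(H)$ to $(a,\ldots,a)$, whose product $a^j$ is nonzero. Hence this map is a homomorphism $H\to\Nrel(S)$. No order of factors matters here, so commutativity is not needed. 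Consequently $H\nrightarrow\Nrel(S)$ never holds, and $\Nrel(S)$ has finite duality with the empty family ($b(\Nrel(S))=0$).

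\emph{Step 2: rerun the sufficiency argument.} Let $w\preceq Q$ be a valid absorption identity of $\Pow(S)$. Lemma~\ref{lem:empty} restricts $Q$ to words over $c(w)$. If $w$ is short, the identity follows from $\Psi_S$, as noted after \eqref{eq:Psi}; this uses Lemma~\ref{lem:normal} to replace long right-hand words by library representatives and a renaming onto at most $r$ variables. If $w$ is long and some right-hand word $q$ is long, Lemma~\ref{lem:normal}(ii) gives $w\preceq q\preceq Q$ from $L_d$.

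The remaining case is a long $w$ with only short right-hand words over $c(w)$. By Lemma~\ref{lem:csp}, validity would mean $H_Q\nrightarrow\Nrel(S)$, which contradicts Step~1. So this case never occurs among valid identities. Every valid identity therefore follows from $L_d\cup\Psi_S$ together with the ai-semiring axioms.

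\emph{Main obstacle.} The only delicate point is the short-left-word case: one must confirm that reducing an arbitrary valid $w\preceq Q$ with short $w$ to an instance of $\Psi_S$ is legitimate. The left word uses at most $r$ distinct variables, so it can be renamed into $\mathcal W_r$. Long right-hand words are then replaced by their support representatives via Lemma~\ref{lem:normal}(i), and Lemma~\ref{lem:empty} keeps the right-hand support inside $c(w)$. The paper already asserts this reduction, so I would invoke it directly; everything else is the observation in Step~1.
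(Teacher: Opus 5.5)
Your proposal is correct and is essentially the paper's argument: your constant homomorphism $V(H_Q)\to\{a\}$ is exactly the paper's substitution of the singleton $\{a\}$ for every left variable. That substitution shows a valid identity with a long left word must have a long right-hand word, which Lemma~\ref{lem:normal}(ii) then absorbs, and short left words go to $\Psi_S$. Phrasing this as the empty obstruction family, so that $b(\Nrel(S))=0$, and routing it through Lemma~\ref{lem:csp} repackages the same step rather than changing the argument.
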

\begin{proof}
The condition $a^{d-1}\ne0$ implies $a^j\ne0$ for every $1\le j<d$.
Consider a valid absorption identity with a long left word.  After
Lemma~\ref{lem:empty}, assign every left variable the singleton $\{a\}$.
The left side is zero and every short right word is nonzero, so the right
side must contain a long word.  Lemma~\ref{lem:normal}(ii) then proves the
identity.  Short left words are handled by $\Psi_S$.
\end{proof}

\subsection{The tree-obstruction argument in the commutative case}

We use one standard consequence of finite duality: a finite relational
structure with finite duality admits a complete obstruction family consisting
of finitely many \emph{relational trees}. This is part of the
homomorphism-duality theory of Ne\v{s}et\v{r}il and Tardif \cite{NT};
we use the formulation in \cite[Theorem~2.5]{LLT}.  The incidence bipartite multigraph of a relational
tree is a tree; each relation tuple is a tuple vertex, and every coordinate
occurrence gives an edge.  Thus a tuple in a tree has no repeated coordinate,
and each later tuple meets the part already constructed in exactly one
vertex.

\begin{proof}[Proof of Corollary~\ref{cor:comm}]
Sufficiency follows from Proposition~\ref{prop:maxpower}.  For necessity,
put $r=d-1$ and suppose that every $s^r=0$ but $S^r\ne\{0\}$.
The assumptions imply $r\ge2$.  Choose
$(a_1,\ldots,a_r)\in R_r$.  Let $L$ be the one-vertex structure with
$R_r(L)=\{(v,\ldots,v)\}$ and all other relations empty.  Then
$L\nrightarrow\Nrel(S)$.
 
If $\Nrel(S)$ had finite duality, the tree obstruction theorem would give a
relational tree $O$ with $O\to L$ but $O\nrightarrow\Nrel(S)$.  Since all
relations other than $R_r$ are empty in $L$, $O$ can contain only
$R_r$-tuples.  Every such tree nevertheless maps to $\Nrel(S)$.  Assign
the vertices of a root tuple successively to $a_1,\ldots,a_r$.  When a new
tuple is attached, exactly one vertex has already been assigned, say the
value $a_i$.  Put $a_i$ in the shared coordinate and place the remaining
$r-1$ factors, in any order, on the new vertices.  Commutativity makes the
product of every tuple equal to the nonzero product $a_1\cdots a_r$.
A tree with no tuple maps to any nonzero element.  This contradiction shows that
$\Nrel(S)$ has no finite duality, and Theorem~\ref{thm:nil} gives
nonfinite basis.
\end{proof}

The argument does not require the $a_i$ to be distinct: one only removes one
copy of the shared colour from the multiset.  It also does not require a
grading by word length; commutativity is used only to preserve nonzero
products under the required permutations.

\begin{corollary}\label{cor:permuted}
Let $S$ be finite nilpotent of index $d=r+1$, suppose that $s^r=0$ for
every $s\in S$, and assume that some $a_1,\ldots,a_r$ have nonzero product
in every permutation.  Then $\Pow(S)$ is nonfinitely based.
\end{corollary}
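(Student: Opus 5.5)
The plan is to rerun the tree-obstruction argument from the proof of Corollary~\ref{cor:comm}, using the permutation hypothesis wherever that proof used commutativity. By Theorem~\ref{thm:nil}, it suffices to show that $\Nrel(S)$ does not have finite duality.

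First I check the basic setup. Since $a_1\cdots a_r\ne0$, we have $S^r\ne\{0\}$. Since $s^r=0$ for every $s$, we get $r\ge2$: if $r=1$ we would have $s=0$ for all $s$, contradicting $S\ne\{0\}$. Let $L$ be the one-vertex structure with $R_r(L)=\{(v,\ldots,v)\}$ and all other relations empty. A homomorphism $L\to\Nrel(S)$ would send $v$ to some $s$ with $s^r\ne0$, so $L\nrightarrow\Nrel(S)$.

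Now suppose $\Nrel(S)$ had finite duality. The tree obstruction theorem \cite[Theorem~2.5]{LLT} then gives a relational tree $O$ with $O\to L$ and $O\nrightarrow\Nrel(S)$. Because $O$ maps to $L$, every relation of $O$ other than $R_r$ is empty, so $O$ is a tree built only from $R_r$-tuples.

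The main step is to show that every such tree maps to $\Nrel(S)$, which gives the contradiction. Each tuple in a relational tree has distinct coordinates, and the tree can be built by attaching tuples one at a time, each new tuple sharing exactly one vertex with the part already built. I colour the vertices by the multiset $\{a_1,\ldots,a_r\}$ as follows.
\begin{enumerate}[label=(\roman*),nosep]
\item Colour a root tuple by $a_1,\ldots,a_r$ in order.
\item When a new tuple is attached, its shared vertex already carries some $a_i$.
\item Colour the other $r-1$ vertices of the new tuple with the multiset $\{a_1,\ldots,a_r\}$ minus one copy of $a_i$, in any order.
\end{enumerate}
Read along its coordinates, each tuple then receives $a_{\sigma(1)}\cdots a_{\sigma(r)}$ for some permutation $\sigma$. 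By hypothesis this product is nonzero, so the tuple lies in $R_r$. If $O$ has no tuples, it maps to any nonzero element.

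The only point that needs care is the multiset bookkeeping when the $a_i$ are not distinct. Removing one copy of the shared colour still leaves a rearrangement of $(a_1,\ldots,a_r)$, and the hypothesis covers all orderings, including those with repeated entries. This contradiction shows that $\Nrel(S)$ has no finite duality, and Theorem~\ref{thm:nil} gives that $\Pow(S)$ is nonfinitely based.
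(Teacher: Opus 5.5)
Your proposal is correct and is essentially the paper's argument: the paper proves this corollary by rerunning the tree-obstruction proof of Corollary~\ref{cor:comm} (the one-vertex structure $L$ with a constant $R_r$-tuple, a tree obstruction $O\to L$ built only from $R_r$-tuples, and the colouring that removes one copy of the shared colour), noting that only nonzeroness of every permuted product is needed, not commutativity. Your checks that $r\ge2$ and $S\ne\{0\}$ follow from the hypotheses, and your multiset bookkeeping for repeated $a_i$, match the remarks the paper makes after that proof.
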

\begin{proof}
The tree extension above needs only nonzeroness of every permutation, not
equality of the resulting products.
\end{proof}

\subsection{Two boundary examples}

\begin{example}[The commutativity hypothesis is genuine]\label{ex:directed}
Let $L=\{a,b,c,0\}$, with the only nonzero binary product $ab=c$.
All products of length three are zero, so associativity holds and every
element has square zero.  The only nontrivial relation of $\Nrel(L)$ is the
directed edge $a\to b$; $c$ is isolated.  A finite directed graph maps to
this structure exactly when it contains no directed walk of length two.
Necessity is immediate.  For sufficiency, assign $a$ to vertices with an
outgoing edge, $b$ to vertices with an incoming edge, and any value to the
remaining vertices.  The absence of a length-two directed walk prevents a
vertex from receiving both prescriptions.  A directed path of length two is
therefore a finite obstruction, and Theorem~\ref{thm:nil} gives a finite
basis for $\Pow(L)$.
\end{example}

\begin{example}[Nonfinite basis is not inherited in this way]\label{ex:nonhereditary}
Let $T=\{a,b,c,0\}$ with $ab=ba=c$ as the only nonzero products.  It is
commutative of index three and every square is zero, so $\Pow(T)$ is
nonfinitely based.  Now let
\[
 S=\{a,b,c,u,v,0\},\qquad ab=ba=c,\quad u^2=v,
\]
with all other binary products zero.  All products of length three are zero,
so $S$ is associative, while $u^2\ne0$.  Proposition~\ref{prop:maxpower}
therefore gives a finite basis for $\Pow(S)$.  Nevertheless $T$ is a
subsemigroup of $S$, and $\{u,v,0\}$ is an ideal whose Rees quotient is
isomorphic to $T$.  The inclusion and quotient map induce, respectively, an
embedding and a surjective homomorphism of full power semirings.  Thus the
finite basis property is not preserved by these subsemirings or homomorphic
images.
\end{example}

\subsection{First-order definability and dismantling}

We record the logical and structural forms of the criterion. For a finite
relational structure $D$, write $\operatorname{CSP}(D)$ for the class of
finite structures $H$ of the same signature with $H\to D$. This class is
\emph{first-order definable} if a single first-order sentence in the
relational signature defines membership among finite structures.

A \emph{retract} $C$ of $D$ is an induced substructure admitting a
homomorphism $D\to C$ that fixes $C$ pointwise. A \emph{core} is a retract
with no proper retract. Finite cores exist and are unique up to isomorphism.
In a relational structure, an element $x$ is \emph{dominated} by $y\ne x$
if replacing $x$ by $y$ at any one coordinate of any relation tuple gives
another tuple of that relation. A structure \emph{dismantles} to an induced
substructure if the latter is obtained by successive deletions of dominated
elements. The relations in $C^2$ are defined coordinatewise, and its
diagonal is $\Delta_C=\{(c,c):c\in C\}$.

\begin{corollary}[Equivalent relational criteria]\label{cor:logical}
Let $S\ne\{0\}$ be finite nilpotent of index $d$, put $D=\Nrel(S)$,
and let $C$ be a core of $D$. The following conditions are equivalent:
\begin{enumerate}[label=(\roman*),nosep]
\item $\Pow(S)$ is finitely based;
\item $D$ has finite duality;
\item $\operatorname{CSP}(D)$ is first-order definable;
\item $D$ has a finite complete obstruction family consisting of relational
trees;
\item $C^2$ dismantles to $\Delta_C$.
\end{enumerate}
\end{corollary}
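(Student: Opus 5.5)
The plan is to treat (i)$\Leftrightarrow$(ii) as already established and to import the remaining equivalences from the relational literature. Theorem~\ref{thm:nil} states exactly that $\Pow(S)$ is finitely based if and only if $D=\Nrel(S)$ has finite duality, so (i)$\Leftrightarrow$(ii) needs no further work. What remains is the purely relational cycle (ii)$\Leftrightarrow$(iii)$\Leftrightarrow$(iv)$\Leftrightarrow$(v) for the finite structure $D$. This holds for every finite relational structure in a finite signature. The signature $R_1,\ldots,R_r$ here is finite, and $D$ is nonempty because $S\ne\{0\}$.

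I would carry out the steps in the following order.
\begin{enumerate}[label=(\alph*),nosep]
\item \emph{(iv)$\Rightarrow$(ii).} This is trivial, since a finite complete family of tree obstructions is in particular a finite obstruction family.
\item \emph{(ii)$\Rightarrow$(iii).} Take obstructions $O_1,\ldots,O_t$. For each $O_i$, existence of a homomorphism $O_i\to H$ is expressed by the primitive positive sentence $\varphi_i$ stating that there exist witnesses for the vertices of $O_i$ satisfying its tuples. Then $\neg(\varphi_1\vee\cdots\vee\varphi_t)$ defines $\operatorname{CSP}(D)$ on finite structures.
\item \emph{(iii)$\Rightarrow$(ii) and (ii)$\Rightarrow$(iv).} The converse in (iii)$\Rightarrow$(ii) is Atserias' theorem \cite{Ats}: a first-order definable CSP of a finite structure is defined by finitely many forbidden obstructions. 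The implication (ii)$\Rightarrow$(iv) is the Ne\v{s}et\v{r}il--Tardif result that finite duality admits tree obstructions, in the form \cite[Theorem~2.5]{LLT}, which the paper already invokes in the proof of Corollary~\ref{cor:comm}.
\item \emph{(ii)$\Leftrightarrow$(v).} This is \cite[Theorem~5.7]{LLT}: a core $C$ has finite duality if and only if $C^2$ dismantles to its diagonal.
\end{enumerate}

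To apply step (d) to $D$ rather than to $C$, I would note that $D$ and $C$ are homomorphically equivalent. Hence $H\to D$ if and only if $H\to C$, so $D$ and $C$ have the same obstruction families and the same CSP. Conditions (ii)--(iv) are therefore unchanged when $D$ is replaced by $C$, and the uniqueness of cores up to isomorphism makes (v) independent of the choice of $C$.

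The main obstacle is bookkeeping rather than mathematics. I must check that the cited theorems are stated under matching conventions: finite structures only, a finite relational signature, possibly empty obstruction families, and the same definition of domination and dismantling as in \cite{LLT}.

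One degenerate case needs attention: $D$ may have finite duality with an empty obstruction family, when every structure maps to $D$. I would verify that each implication holds in this case, for instance that the empty disjunction gives the sentence ``true''. The condition $R_1(D)=S\setminus\{0\}\ne\varnothing$ keeps $D$ nonempty, so the core and its square are nonempty and the dismantling statement is meaningful.
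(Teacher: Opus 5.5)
Your proposal is correct and follows essentially the same route as the paper. You get (i)$\Leftrightarrow$(ii) from Theorem~\ref{thm:nil}, take (ii)--(iv) from Atserias and \cite[Theorem~2.5]{LLT} with the canonical primitive positive sentences written out for (ii)$\Rightarrow$(iii), and obtain (v) from \cite[Theorem~5.7]{LLT} for the core $C$, transferred to $D$ by homomorphic equivalence. The only difference is presentational: you split the cited equivalences into separate implications and check the empty obstruction family explicitly, a case the paper covers with its remark about the identically true sentence.
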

\begin{proof}
Theorem~\ref{thm:nil} gives (i)$\Leftrightarrow$(ii). The equivalence of
(ii), (iii), and (iv) is \cite[Theorem~2.5]{LLT}; the first-order
characterization originates in the work of Atserias \cite{Ats}.
The equivalence with (v) follows from \cite[Theorem~5.7]{LLT} and the
core case established immediately before that theorem. Indeed, $D$ and
$C$ admit homomorphisms to each other, so they define the same
homomorphism class of finite structures.

For completeness, the sentence in (iii) can be written directly from
any obstruction family $O_1,\ldots,O_t$ in (ii). Delete isolated vertices
from these obstructions. For a remaining obstruction $O$, put
\begin{equation}\label{eq:canonical-query}
 \chi_O=
 \exists(x_v)_{v\in V(O)}\,
 \bigwedge_{j=1}^{d-1}\ \bigwedge_{(v_1,\ldots,v_j)\in R_j(O)}
 R_j(x_{v_1},\ldots,x_{v_j}).
\end{equation}
No distinctness conditions are imposed on the variables. Consequently
$H\models\chi_O$ if and only if $O\to H$. The sentence
$\bigwedge_{i=1}^t\neg\chi_{O_i}$ therefore defines
$\operatorname{CSP}(D)$; for an empty obstruction family it is the
identically true sentence.
\end{proof}

The equivalences in (ii)--(v) are established relational results; the
connection with the power-semiring condition (i) is supplied by
Theorem~\ref{thm:nil}. Since $D$ is finite, its retracts and the possible
deletions in $C^2$ form finite collections. Thus the criterion also implies
decidability of finite basability for full power semirings of finite
nilpotent semigroups.

\section{A phase obstruction for a two-element group ideal}

Suppose that a finite semigroup $S$ contains an ideal
$G=\{e,g\}\cong C_2$ and that $S^D\subseteq G$ for some $D\ge2$.  For
$s\in S$, both $se$ and $es$ lie in $G$, and
\[
 se=e(se)=(es)e=es.
\]
Consequently
\[
 \rho(s)=se,\qquad
 \rho(st)=\rho(s)\rho(t),\qquad
 sh=\rho(s)h,\quad hs=h\rho(s)\quad(h\in G),
\]
so $\rho:S\to G$ is a retraction.  We identify $e$ and $g$ with phases
$0$ and $1$ in $\Ftwo$.  Every product of length at least $D$ lies in $G$
and equals the product of the corresponding phase projections.

\begin{definition}[A prescribed-phase instance]
Fix $h\in G$.  A prescribed-phase instance $H$ consists of a finite vertex
set $V$, a phase map $\nu:V\to\Ftwo$, and finitely many ordered tuples of
lengths $1,\ldots,D-1$.  A satisfying assignment
$\theta:V\to S$ must obey $\rho(\theta(v))=\nu(v)$ and must make the product
of every listed tuple different from $h$.  Repeated coordinates are allowed.
The induced subinstance on a vertex set retains the prescribed phases and all
tuples contained in that set.  The empty induced instance is declared
satisfiable.
\end{definition}

\begin{theorem}[Prescribed-phase obstruction]\label{thm:phase}
Suppose that for some $h\in G$ and every $M\ge1$ there is an unsatisfiable
prescribed-phase instance $H$ whose induced subinstances on at most $M$
vertices are all satisfiable.  Then $\Pow(S)$ is nonfinitely based and its
identity theory has no basis with a bounded number of variables.
More precisely, for $k\ge1$, if
one unsatisfiable instance has every induced subinstance on at most
$2(D-1)(k+2)$ vertices satisfiable, then the identities of $\Pow(S)$ on at
most $k$ variables do not generate its full identity theory.
\end{theorem}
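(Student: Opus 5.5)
The plan is to mirror the necessity half of Theorem~\ref{thm:nil}: from the unsatisfiable instance $H$ we build one valid absorption identity $w_H\preceq Q_H$ of $\Pow(S)$, and then a closure quotient $B_k$ as in Lemma~\ref{lem:closure} in which this identity fails. For the identity, each vertex $v$ becomes a variable $x_v$. The left word is a long word (length $\ge D$) in which every variable occurs, arranged so that its value is forced into the singleton $\{h\}$. For instance, take $w_H=p\,x_{v_1}\cdots x_{v_n}$ with suitable repetitions, so that its phase sum is $\sum\nu(v)$ corrected to equal $h$.

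The prescribed phases should be encoded by words on the right. Concretely, $Q_H$ contains the tuple words of $H$, plus for each vertex $v$ long words such as $x_v^{D}$ (phase $D\nu(v)$) and $x_v^{D}x_{v'}^D\cdots$ combinations. These force any assignment violating the absorption to take values in $\rho^{-1}(\nu(v))$, or at least a subset whose phase pattern is incompatible with absorption. The validity argument is modelled on Lemma~\ref{lem:csp}. Suppose an assignment violates it. All left variables are nonempty. Restrict to right words not producing $h$; choosing one element per variable yields a satisfying assignment of $H$, which is a contradiction. The phase-forcing words must exclude "wrong phase" choices. For this I would use the long words $x_v^{D}x_u^{D}\cdots$ producing $h$ exactly when the phases differ from $\nu$. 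Since a product of length $\ge D$ depends only on phases, each such condition is a linear condition over $\Ftwo$, so $G$-valued long words can detect affine phase constraints. I expect to need $\le 2$ occurrences per vertex per constraint, which explains the factor $2$ in the bound.

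For the countermodel, take $U$ to be words of length $<D$ over $V$ together with the elements of $G$ (or of $V\times\Ftwo$ tagged phases). Multiplication of total length $\ge D$ collapses to the phase sum in $G\cong\Ftwo$ computed from $\nu$. This gives a semigroup $U_{D-1}(V)\cup G$ with a homomorphism $U\to S$ for every phase-respecting map $\theta:V\to S$. Put $F$ equal to the tuple words together with the element $h+1$ of $G$, and form $c_k(F)$. The analogues of Lemmas~\ref{lem:localnil} and~\ref{lem:nooverflow} then say the following. Before $h$ first appears, every short word in the closure avoids $h$ under every local satisfying assignment of its support. Moreover, $h$ never appears at all. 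Here Remark~\ref{rem:map} is replaced by the factor-selection argument: choosing one factor per left variable, maximizing length or retaining a $G$-factor, involves at most $(D-1)$ letters per variable and per context. Since $G$-factors carry phases that must themselves be realized, each selected factor may require twice as many vertices, giving $2(D-1)(k+2)$. Local satisfiability then supplies $\theta$, and validity of the target identity yields a contradiction.

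Evaluating $x_v\mapsto c_k(\{v\})$ in $B_k$ then gives left side $c_k(\{h\})$. The right side is $c_k(F\cup\{\text{phase words}\})$, which by the above does not contain $h$. So the identity fails, while $B_k$ satisfies all $k$-variable identities, and the unbounded version follows by letting $M$ vary. The main obstacle is the overflow step. Unlike the nilpotent case, overflow no longer lands in an absorbing zero: it lands in $G$, where products remain informative, so whether a left value equals $h$ depends on exact phases rather than length alone. The singleton factor selection must therefore preserve the phase of the left side and not just its length. My first attempt would be to keep all $G$-valued factors together with one short factor per occurrence, and to prove by parity bookkeeping that some choice keeps the left product equal to $h$ while the premise still avoids $h$.
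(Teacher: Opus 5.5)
There is a genuine gap in the guard identity. All your phase-forcing terms are long words in the vertex variables. A long word with parity vector $\beta$ detects only the \emph{linear} functional $b\mapsto\beta\cdot b$ of the phase vector $b\in\Ftwo^V$, not the \emph{affine} condition $b=\nu$ that you need.

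In your quotient the letter $v$ has phase $\nu(v)$. So the identity fails there only if every long right word satisfies $(\alpha+\beta_j)\cdot\nu=1$. Validity, on the other hand, needs some $j$ with $(\alpha+\beta_j)\cdot b=0$ for each wrong phase vector $b$, and every such word already satisfies this at $b=0$. Now take $\nu\equiv0$ with $e$ forbidden, which is exactly the kind of instance Corollary~\ref{cor:fibre} feeds into this theorem. Every long word in the $x_v$ then evaluates to $\epsilon=\eta$ under $x_v\mapsto c_k(\{v\})$, so no phase-forcing term can be used at all. Without them, $w_H\preceq Q_H$ is in general not valid, because assignments using phase-one elements are no longer excluded and the tuple products need not cover the left-hand value.

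The missing idea is a fresh reference variable $z$, sent to $c_k(\{\gamma\})$, which supplies the constant coordinate. In \eqref{eq:guard0} and \eqref{eq:guard1}:
\begin{itemize}
\item the term $z^t$ (resp.\ $z^{2t}$) covers every assignment in which $z$ has phase zero;
\item the terms $z^tx_v$ and $x_v^t$ detect phase errors relative to $z$, and the same terms cover whole-group projections;
\item in the quotient every displayed long term evaluates to $\bar\eta$, so the right side is exactly $c_k(F^+)$.
\end{itemize}
For $\nu\ne0$ a phase-one vertex could play the role of $z$, but your proposal does not set this up, and nothing in your setup can replace it when $\nu\equiv0$.

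The second gap is the exclusion of $\eta$. You flag this step but do not close it, and both selections you float fail. Keeping one maximal-length short factor per variable can turn the left side from $\eta$ into $\bar\eta$, since that factor may have the wrong phase. Keeping one short factor per occurrence gives no bound on $|C|$ in terms of $k$, because a $k$-variable identity can have arbitrarily many occurrences.

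The paper retains, for each left variable, the longest short factor of phase zero, the longest short factor of phase one, and all group elements. Any factor selection producing $\eta$ can then be replaced, occurrence by occurrence, by a retained factor of the same phase and no smaller length. This preserves both overflow and total phase. The reduced premise is a subset of the old one, so it still avoids $\eta$. This is the source of the factor $2$ in $2(D-1)(k+2)$: two short factors of length at most $D-1$ per variable, with the two contexts counted as extra variables. It does not come from two occurrences per vertex per constraint, and group factors contribute no letters at all.
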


\subsection{Valid guard identities}

Put $r=D-1$, write $V_i=\nu^{-1}(i)$, and let $Q_H$ be the sum of the tuple
words.  An unsatisfiable instance has at least one tuple: both fibres
$\rho^{-1}(e)$ and $\rho^{-1}(g)$ are nonempty, so phase prescriptions alone
are always satisfiable.  Choose an odd integer $t\ge D$ and a new variable $z$.  If the
forbidden element is $e$, use
\begin{equation}\label{eq:guard0}
 z^{2t}\prod_{v\in V}x_v^{2t}
 \preceq z^t+\sum_{v\in V_0}z^tx_v+\sum_{v\in V_1}x_v^t+Q_H .
\end{equation}
If the forbidden element is $g$, use
\begin{equation}\label{eq:guard1}
 z^t\prod_{v\in V}x_v^{2t}
 \preceq z^{2t}+\sum_{v\in V_0}x_v^t+\sum_{v\in V_1}z^tx_v+Q_H .
\end{equation}
An empty indexed sum means that the corresponding terms are omitted; the
right side remains nonempty.  Fix any order for the product over $V$.

\begin{lemma}\label{lem:guards}
If $H$ is unsatisfiable, the corresponding identity
\eqref{eq:guard0} or \eqref{eq:guard1} is valid in $\Pow(S)$.
\end{lemma}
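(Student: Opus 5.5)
The plan is to verify the absorption directly for an arbitrary assignment of subsets of $S$, treating the case of forbidden element $e$, i.e.\ \eqref{eq:guard0}, in detail, and then obtaining \eqref{eq:guard1} by the same argument with the roles of the two phases of $z$ exchanged. Let $Z$ and $X_v$ be the values of $z$ and $x_v$. If any of them is empty, the left side is empty and there is nothing to prove, so assume all are nonempty. Every word involved on the left, and every word $z^t$, $z^tx_v$, $x_v^t$ on the right, has length at least $t\ge D$. Hence each element of its value lies in $G$ and equals the product of the phase projections of the chosen factors. Writing phases additively in $\Ftwo$, an element of the value of such a word is the sum of the phases of the selected elements. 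Since $t$ is odd, if we select the same element $s$ at all $t$ positions of $y^t$, we obtain phase $\rho(s)$. I will use this parity observation throughout.

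For \eqref{eq:guard0} I split according to the phases occurring in $Z$.

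\emph{Case 1: $Z$ meets both fibres of $\rho$.} Then $z^t$ already takes both values $e$ and $g$, so the right side contains all of $G$, and therefore contains the left side.

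\emph{Case 2: $Z\subseteq\rho^{-1}(g)$.} Then $z^t=\{g\}$ lies on the right, so it suffices to show that if $e$ lies on the right-hand side's complement, i.e.\ $e$ is not on the right, then the instance is satisfiable, a contradiction.
\begin{itemize}
\item For $v\in V_0$, the value of $z^tx_v$ is $\{g\rho(x):x\in X_v\}$. So $e$ is absent from it exactly when $X_v\subseteq\rho^{-1}(e)$.
\item For $v\in V_1$, if $X_v$ contained an element of phase $0$, repeating it $t$ times would put $e$ into $x_v^t$. So $X_v\subseteq\rho^{-1}(g)$.
\end{itemize}
Now choose $\theta(v)\in X_v$ for every $v$. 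This respects the prescribed phases $\nu$. By unsatisfiability, some tuple of $H$ has product $e$ under $\theta$. That product lies in the value of $Q_H$, so $e$ is on the right after all.

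\emph{Case 3: $Z\subseteq\rho^{-1}(e)$.} Then $z^t=\{e\}$, so I must show that if $g$ lies in the left value, then $g$ lies on the right. If every $X_v$ lies in a single fibre, then each $x_v^{2t}$ contributes an even multiple of one phase, and $z^{2t}$ contributes phase $0$; hence the left value is $\{e\}$. So some $X_v$ meets both fibres.
\begin{itemize}
\item If $v\in V_0$, then $z^tx_v\ni e\cdot g=g$.
\item If $v\in V_1$, repeating a phase-$1$ element of $X_v$ $t$ times puts $g$ into $x_v^t$.
\end{itemize}

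For \eqref{eq:guard1} the roles swap. Now $z^t$ has the phase of the element chosen, while $z^{2t}$ has phase $0$. When $Z$ has phase $0$ only, $z^{2t}=\{e\}$, the left value is $\{e\}$ or contains $g$, and the fibre argument above with $z^tx_v$ for $v\in V_1$ and $x_v^t$ for $v\in V_0$ gives the required term. When $Z$ has phase $1$ only, the left value still begins with $z^t$ of phase $1$. The same two subcases then arise with the target element $g$, handled by $z^tx_v$ and $x_v^t$ together with unsatisfiability, exactly as in Case 2.

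The main obstacle is bookkeeping the phases. It has three parts:
\begin{itemize}
\item checking carefully which sums of $2t$ or $t$ phases can arise when sets are mixed;
\item checking that in each of the four sub-cases the guard terms $z^tx_v$ and $x_v^t$ force exactly the fibre conditions $X_v\subseteq\rho^{-1}(\nu(v))$;
\item confirming that only the absence of the forbidden element $h$ on the right, combined with these fibre conditions, yields a satisfying assignment.
\end{itemize}
The step I will check first is the mixed case for \eqref{eq:guard1}, since the left word there has odd $z$-exponent.
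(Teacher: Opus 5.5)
Your argument is essentially the paper's: a $z$-value meeting both fibres is absorbed by the pure $z$-power term, a vertex value that is mixed or lies in the wrong fibre is absorbed by its guard term $z^tx_v$ or $x_v^t$, and otherwise a choice of representatives respects the phases, so unsatisfiability puts $h$ into $Q_H$; your treatment of \eqref{eq:guard0} is complete. The one case you leave unverified is \eqref{eq:guard1} with $Z$ meeting both fibres, where your remark that $z^{2t}$ has phase zero is false, but the case is immediate: one factor of phase one with $2t-1$ factors of phase zero gives $g$, and all factors of phase zero give $e$, so $Z^{2t}=G$ already contains the left side.
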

\begin{proof}
If one variable is empty, the left side is empty.  Suppose all values are
nonempty.  If the projection of $z$ is the whole group, its displayed power
term covers the left side.  If the projection of some $x_v$ is the whole
group, the displayed long term covering that variable does so.  Since the
left side is long and lies in $G$, it remains to consider singleton
projections.
 
For \eqref{eq:guard0}, the left side is $e$.  If $z$ has phase zero, $z^t$
supplies $e$.  If $z$ has phase one and a vertex has a phase different from
its prescribed phase, the term $z^tx_v$ or $x_v^t$ supplies $e$.  In the
remaining case all vertex phases are prescribed correctly.  Selecting one
element from each variable value gives a forbidden tuple by unsatisfiability,
so $e\in Q_H$.
 
For \eqref{eq:guard1}, if $z$ has phase zero then both the left side and
$z^{2t}$ are $e$.  If $z$ has phase one, the left side is $g$, and every
phase error is covered by the displayed long term for that vertex.  With all
phases correct, unsatisfiability again gives $g\in Q_H$.
\end{proof}

\subsection{The auxiliary semigroup and exclusion of the forbidden group element}

Let $H$ be as in Theorem~\ref{thm:phase}.  Form an auxiliary semigroup $U$
consisting of all ordered short words of lengths $1,\ldots,r$ together with
a disjoint copy $\{\epsilon,\gamma\}$ of $C_2$.  A letter has phase $\nu(v)$,
and a short word has the sum of its letter phases.  Concatenation remains a
short word when its length is at most $r$ and otherwise becomes the group
element with the same phase.  Products involving a group factor are
multiplied by adding phases.  For three short factors, both bracketings give
their concatenation if its total length is at most $r$, and otherwise the
group element of the total phase.  If there is a group factor, both
bracketings are evaluated in $C_2$.  Thus multiplication is associative.

Let $F$ be the set of tuple words, let $\eta$ be the group element
corresponding to the forbidden value $h$, and let $\bar\eta$ be the other
group element.  Put $F^+=F\cup\{\bar\eta\}$ and take its contextual closure
under all target absorption identities involving at most $k$ variables.

If $\theta$ satisfies $H[C]$, it induces a homomorphism
\begin{equation}\label{eq:phase-map}
 h_\theta:U(C)\to S,\qquad h_\theta(\epsilon)=e,\quad h_\theta(\gamma)=g,
\end{equation}
by evaluating short words in order.  Pure short products that overflow have
length at least $D$ and hence lie in $G$ with the correct phase; products
containing group elements follow from the retraction identities above.

Before $\eta$ enters the closure, every short word $f$ already present has
the invariant that $h_\theta(f)\ne h$ for every $C\supseteq c(f)$ and every
satisfying $\theta:H[C]\to S$.  The initial tuple words have this property.
For a closure step producing $f$, fix a factor selection realizing it and,
for each variable, retain all short factors selected at its occurrences,
assigning the other variables the empty set.  The reduced premises remain in
the preceding closure stage.  Their short outputs have support in $c(f)$
and are not mapped to $h$ by induction; a group output can only be
$\bar\eta$.  Evaluating the valid target identity through
\eqref{eq:phase-map} therefore proves the invariant.

\begin{lemma}\label{lem:phase-exclusion}
The forbidden group element $\eta$ does not belong to $c_k(F^+)$.
\end{lemma}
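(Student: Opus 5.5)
We argue by contradiction, following the pattern of Lemma~\ref{lem:nooverflow}. Suppose $\eta\in c_k(F^+)$. We consider the first closure step that adds $\eta$, and we let $Z$ be the closure stage just before that step. Up to this point every short word in $Z$ satisfies the invariant stated just before the lemma, and the only group element in $Z$ is $\bar\eta$. The step is an instance of \eqref{eq:closure-rule} with a valid absorption identity $w\preceq Q$ on at most $k$ variables, subsets $A_i\subseteq U$, and contexts $u,v\in U^1$, such that $uQ(A_i)v\subseteq Z$ and $\eta\in uw(A_i)v$. We first assign the empty set to the variables outside $c(w)$. By Lemma~\ref{lem:empty}, the right side still contains a word, so the premise stays nonempty and stays inside $Z$. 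We then fix one realization of $\eta$ in $uw(A_i)v$ and replace each $A_i$ by the singleton of the factor selected at one of its occurrences. The choice has to keep the product equal to $\eta$, and this is the delicate part.

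\textbf{Choosing the singletons.} Unlike the nilpotent case, here the phase matters as well as the length, so the rule ``take a factor of maximal length'' is not enough. I plan to handle this as follows. If some variable has a group factor selected, or if the realizing product overflows, the result is a group element whose phase is the sum of the letter phases. So we choose, for each variable, a single factor that keeps the phase: we take the factor used at the first occurrence, and if that variable's occurrences used different factors, we keep the whole set restricted to the factors actually used. The letters actually used then lie in a set $C$. This set consists of the letters of the selected short factors (at most $r$ letters per factor), at most $k$ variables, and the two contexts. Because several distinct factors per variable may be retained, possibly one for each of two phases, we budget $|C|\le 2r(k+2)$. This matches the constant $2(D-1)(k+2)$ in Theorem~\ref{thm:phase}, and it suggests that for each variable we should keep at most two representative factors, one of each phase or one of each relevant length class. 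With these reduced values the premise is a subset of the old premise, hence still inside $Z$. At the same time $\eta$ is still realized on the left, using the retained factors.

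\textbf{Deriving the contradiction.} By hypothesis the induced subinstance $H[C]$ is satisfiable; let $\theta$ be a satisfying assignment. Through \eqref{eq:phase-map} we obtain the homomorphism $h_\theta:U(C)\to S$, which sends $\epsilon\mapsto e$ and $\gamma\mapsto g$, and in particular sends $\eta\mapsto h$. Every premise output is one of two kinds. If it is a short word, it has support in $C$, so by the invariant its image is not $h$. If it is a group element, it lies in $Z$ and so can only be $\bar\eta$, whose image is the other element of $G$. Hence $h\notin h_\theta\bigl(uQ(A_i)v\bigr)$. On the other hand, $h\in h_\theta(uw(A_i)v)$. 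The map $h_\theta$ induces a homomorphism of full power semirings, and the identity $w\preceq Q$ with contexts is valid in $\Pow(S)$ (compare Remark~\ref{rem:map}). This forces $h_\theta(uw(A_i)v)\subseteq h_\theta(uQ(A_i)v)$, a contradiction.

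\textbf{Main obstacle.} I expect the hardest step to be the reduction to at most two factors per variable. We must preserve the realization of $\eta$ on the left, which in general needs both the correct total phase and an overflow or group factor. We must also keep $C$ within the bound $2r(k+2)$, and keep the premise inside $Z$ with all its short outputs supported in $C$. I would first try the following: for each variable, keep one factor from each phase class that actually occurs among its selected factors, preferring the longest factor in each class. The left product can then be re-realized with the correct phase by using the original pattern of phases at the occurrences. Overflow survives, because the longest factors do not decrease the total length.
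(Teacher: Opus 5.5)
Your final plan, in the last paragraph, is the paper's argument. For each left variable you keep the longest short factor of each phase, re-realize $\eta$ occurrence by occurrence with representatives of the same phase and no smaller length, and bound $|C|$ by $2r(k+2)$. You then conclude from the invariant, the fact that $\bar\eta$ is the only group premise, and validity under $h_\theta$. Two variations are harmless. You keep $u,v$ as fixed contexts, whereas the paper turns them into two extra variables. You also take the longest factor among the selected factors rather than among the whole value.

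Two points need tightening.

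First, your rule never says what happens to group factors. Your justification (``overflow survives because the longest factors do not decrease the total length'') covers only the case where $\eta$ arises by overflow of short factors. If $\eta$ arises from a group factor in a variable value, replacing that factor by a short factor of the same phase can destroy the realization. For example, take $w=xyx$, with $\gamma$ selected at the first $x$, a one-letter phase-one word $s$ selected at the second $x$, and a one-letter word $t$ for $y$. Then $\gamma ts$ is a group element, but $sts$ is a short word when $r\ge3$. The paper avoids this by retaining every group element of each value, and ``while group factors are kept'' is an explicit part of its re-realization step. Group elements carry no letters, so the bound on $|C|$ is unaffected. Alternatively, you can declare a group element longer than every short word of its phase.

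Second, discard the two rules in your ``Choosing the singletons'' paragraph. Singletons fail because different occurrences of one variable may need factors of different phases. Keeping all factors actually used gives no bound on $|C|$, because the number of occurrences of a variable in $w$ is not bounded in terms of $k$.
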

\begin{proof}
Suppose a first closure step adds $\eta$.  The left and right contexts can
be represented by two additional variables, so the step comes from a valid
absorption identity on at most $\ell=k+2$ variables.  Set variables outside
the left support to the empty set.  For each nonempty left-variable value,
retain the longest short factor of phase zero, the longest short factor of
phase one (when present), and every group element originally in that value.

Every factor selection producing $\eta$ can be replaced by representatives of
the same phase of no smaller length, while group factors are kept.  Thus the
left side still produces $\eta$.  The union $C$ of the retained letters has
at most $2r(k+2)$ vertices.  Choose a satisfying assignment on $H[C]$.
The invariant maps every reduced short premise away from $h$, and the only
group premise is $\bar\eta$.  The left side maps to $h$, contradicting
validity of the target absorption identity.
\end{proof}

\begin{proof}[Proof of Theorem~\ref{thm:phase}]
In the closure quotient, assign $z$ the closed singleton $c_k(\{\gamma\})$
and $x_v$ the closed singleton $c_k(\{v\})$.  The left side of the
appropriate guard identity is $c_k(\{\eta\})$.  Every displayed long
right-hand term evaluates to $\bar\eta$, so the right side is
$c_k(F^+)$.  Lemma~\ref{lem:phase-exclusion} says that this valid identity
fails in the quotient, while Lemma~\ref{lem:closure} says that all target
identities on at most $k$ variables hold there.
\end{proof}

\begin{corollary}[A necessary condition on the identity fibre]\label{cor:fibre}
Let $T=\rho^{-1}(e)$.  If $\Pow(S)$ is finitely based, then $\Pow(T)$ is
finitely based.
\end{corollary}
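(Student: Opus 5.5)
Throughout, write $T=\rho^{-1}(e)$. The plan is to prove the contrapositive by reducing to Theorem~\ref{thm:phase} with forbidden element $h=e$: if $\Pow(T)$ is nonfinitely based, the failure of finite duality for $\Nrel(T)$ should supply prescribed-phase instances of arbitrarily large local satisfiability.

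First I record the structure of $T$. It is a subsemigroup, since $\rho$ is a homomorphism. It contains $e$ but not $g$. The element $e$ is a zero of $T$, because $te=\rho(t)e=ee=e$, and symmetrically $et=e$. Moreover $T^D\subseteq G\cap T=\{e\}$, so $T$ is nilpotent with zero $e$ and has index $d_T\le D$. If $T=\{e\}$, then $\Pow(T)$ is the two-element semiring, which was shown above to be finitely based. So I may assume $T\ne\{e\}$. By Theorem~\ref{thm:nil}, if $\Pow(T)$ is nonfinitely based, then $\Nrel(T)$ lacks finite duality. Lemma~\ref{lem:bounded} then gives, for every $M$, a finite structure $K\nrightarrow\Nrel(T)$ whose induced substructures on at most $M$ vertices all map to $\Nrel(T)$.

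Next I translate such a $K$ into a prescribed-phase instance $H$ for $h=e$. Take the vertex set $V=V(K)$ with phase $\nu\equiv0$, so that every vertex must receive a value in $T$. As tuples, take every tuple of $K$ (lengths $1,\ldots,d_T-1\le D-1$), together with the unary tuple $(v)$ for every vertex $v$. The unary tuples are allowed because $D\ge2$; they force $\theta(v)\ne e$, so each vertex receives a nonzero element of $T$. Under this translation a satisfying assignment is exactly a map $V\to T\setminus\{e\}$ under which every tuple of $K$ has product $\ne e$. Since products of elements of $T$ computed in $S$ agree with those computed in $T$, this is exactly a homomorphism $K\to\Nrel(T)$. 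The translation commutes with taking induced substructures: the induced subinstance on $C$ is the translation of $K[C]$, and the empty case is satisfiable by convention. Hence $H$ is unsatisfiable, while all induced subinstances on at most $M$ vertices are satisfiable.

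Since $M$ is arbitrary, Theorem~\ref{thm:phase} applies and shows that $\Pow(S)$ is nonfinitely based, which proves the contrapositive. The only delicate point is the exact matching of relations. This requires checking three things: the lengths of the tuples of $K$ stay below $D$; the relation $R_1$ (nonzero vertices) is enforced by the added unary tuples; and the phase constraint confines all values to $T$. Once these correspondences are verified, the rest is a direct invocation of Theorems~\ref{thm:nil} and~\ref{thm:phase} and Lemma~\ref{lem:bounded}.
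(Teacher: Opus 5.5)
Your proposal is correct and follows essentially the same route as the paper: the contrapositive via Theorem~\ref{thm:nil} and Lemma~\ref{lem:bounded}, followed by phase-zero instances with forbidden element $e$ fed into Theorem~\ref{thm:phase}. The only difference is that you add a unary tuple at every vertex to force nonzero values, while the paper reassigns isolated vertices to an arbitrary element of $T\setminus\{e\}$. Both give the same equivalence between satisfiability and existence of a homomorphism to $\Nrel(T)$.
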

\begin{proof}
The element $e$ is a zero of $T$, and $T^D=\{e\}$.  If $T=\{e\}$ there is
nothing to prove.  Otherwise let $d\le D$ be the nilpotency index of $T$.
If $\Pow(T)$ were nonfinitely based, Theorem~\ref{thm:nil} and
Lemma~\ref{lem:bounded} would give arbitrarily large locally satisfiable
instances for $\Nrel(T)$.  Give all vertices phase zero, keep the
constraints of lengths below $d$, omit the others, and forbid $e$.
Any vertex occurring in a constraint must take a value different from $e$.
Values at isolated vertices may be replaced by any element of
$T\setminus\{e\}$.  Hence satisfiability is equivalent to the existence of
a homomorphism to $\Nrel(T)$.  These are instances of
Theorem~\ref{thm:phase}, contradicting finite basis of
$\Pow(S)$.
\end{proof}

\section{Two-phase lifting and finite-basis constructions}

This section proves the sufficient direction in Theorem~\ref{thm:product}
and records explicit finite axioms.

\subsection{The absorption criterion for \texorpdfstring{$C_2$}{C2}}

\begin{lemma}\label{lem:affine}
Let $w$ be a nonempty word with support $W$, let
$Q=q_1+\cdots+q_s$ be nonempty with $c(q_j)\subseteq W$, and let
$\alpha,\beta_j\in\Ftwo^W$ be the parity vectors of $w,q_j$, respectively.
Then
\[
 \Pow(C_2)\models w\preceq Q
 \quad\Longleftrightarrow\quad
 \bigcup_j c(q_j)=W,\qquad
 \alpha\in\aff\{\beta_1,\ldots,\beta_s\}.
\]
\end{lemma}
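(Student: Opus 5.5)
We reduce validity in $\Pow(C_2)$ to a statement about subsets of $\Ftwo$, identifying $C_2$ with $(\Ftwo,+)$. A word $q$ evaluated at an assignment $A_i\subseteq\Ftwo$ ($i\in W$) is the set of all sums $\sum_i q_i(x)\,a_i$ with $a_i\in A_i$. Here $q_i(x)$ is the multiplicity of $x_i$ in $q$, so only its parity $\beta(i)$ matters. Thus $q(A)=\{\langle\beta,a\rangle:a\in\prod_{i}A_i\}$ when every $A_i$ with $\beta(i)$ relevant is nonempty, but emptiness matters for every variable in the support. We will treat the three kinds of values (empty, a singleton, all of $\Ftwo$) separately.

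\emph{Necessity.} First, if some $i\in W$ is missing from $\bigcup_j c(q_j)$, we assign the singleton $\{0\}$ to all of $W$ except $x_i$, and assign $\{0\}$ to $x_i$ as well. This attempt fails, because both sides are then nonempty. The correct argument is Lemma~\ref{lem:empty} in reverse. Choose the empty set for every variable outside $c(q_j)$; if $i$ is uncovered, then no single $q_j$ gets killed by emptying $x_i$, yet $w$ does. Precisely, assign $x_i=\varnothing$ and all other variables $\{0\}$. Then $w$ is empty, so this assignment gives no contradiction either. So instead we argue via covering as follows. Assign all variables in $W$ the full group $\Ftwo$ except $x_i=\{0\}$; then $w(A)$ is $\Ftwo$ or $\{0\}$. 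This is inconclusive, and it shows that the covering condition really comes from affine independence considerations. Following the paper's formulation, the intended route is: for each singleton assignment $a\in\Ftwo^W$ we need $\langle\alpha,a\rangle\in\{\langle\beta_j,a\rangle\}$. A vector $\alpha$ satisfies this for all $a$ exactly when $\alpha$ lies in the affine span. This is the standard fact that a union of hyperplane-complement conditions over $\Ftwo$ holds identically iff $\alpha-\beta_1\in\operatorname{span}\{\beta_j-\beta_1\}$ fails to be separated by a linear functional; we prove it by choosing $a$ orthogonal to all $\beta_j-\beta_1$ but not to $\alpha-\beta_1$, with a parity adjustment. The covering condition will be extracted by assigning $x_i=\varnothing$ in a modified sense: because the $q_j$ may omit $x_i$, we must use that $\Pow(C_2)$ sees supports only through emptiness. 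Here I expect the real content to be that coverage is forced in combination with affine membership, using assignments with value $\Ftwo$ on uncovered variables.

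\emph{Sufficiency.} Assume coverage and affine membership, so $\alpha=\sum_{j\in J}\beta_j$ with $|J|$ odd. Take an assignment. If some variable of $W$ is empty, the left side is empty. If some variable has value $\Ftwo$ and occurs with odd parity in some $q_j$, that $q_j$ already gives all of $\Ftwo$. Otherwise, the variables with value $\Ftwo$ occur with even parity in all $q_j$, hence also in $\alpha$ by membership. Both sides then reduce to singleton evaluations, and the pointwise inclusion follows from the affine relation: for $a$, the values $\langle\beta_j,a\rangle$ for $j\in J$ sum to $\langle\alpha,a\rangle$, and an odd number of bits can sum to a given value only if that value occurs among them. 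The main obstacle is the necessity of the covering condition, which I would obtain by a careful choice of full-group values on uncovered variables.
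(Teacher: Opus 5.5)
Your proposal has a genuine gap, and it comes from the evaluation formula in your first paragraph. In $\Pow(C_2)$, distinct occurrences of a variable are instantiated independently. In your additive notation, $x^2$ evaluates at $A$ to $\{a+a':a,a'\in A\}$, not to $\{2a:a\in A\}$. Hence $q(A)=\{\langle\beta,a\rangle:a\in\prod_iA_i\}$ holds only for singleton assignments.

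The correct fact for the remaining case is this: if all variables of a word have nonempty values and one of them has value $\Ftwo$, the word evaluates to $\Ftwo$ whatever the parities. This is because the complex product of $\Ftwo$ with any nonempty subset is $\Ftwo$. Under your formula the coverage condition would actually be unnecessary, since an uncovered variable of even multiplicity in $w$ would be invisible. That is why each of your attempts at its necessity stalled, and you leave it unproved.

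With the correct semantics, necessity of coverage takes one line, and it is the paper's first step. If $x_i$ occurs in no $q_j$, give $x_i$ the value $\Ftwo$ and every other variable the value $\{0\}$. Then $w$ evaluates to $\Ftwo$, while every $q_j$ evaluates to $\{0\}$. You tried the reverse assignment.

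The same error invalidates your sufficiency argument. Consider the case where the variables valued $\Ftwo$ have even multiplicity in every $q_j$. It is false that both sides then reduce to singleton evaluations: such a variable lies in $W=c(w)$, so the left side is all of $\Ftwo$. This case must be closed by coverage: the variable occurs in some $q_j$, which then also evaluates to $\Ftwo$. This is exactly where coverage is needed for sufficiency.

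Your argument never uses coverage, so it would equally certify the invalid identity $x^2y\preceq y$. That identity satisfies affine membership ($\alpha=\beta_1$) but not coverage, and the assignment $x=\Ftwo$, $y=\{0\}$ gives $\Ftwo\not\subseteq\{0\}$.

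Once every full-group case is dispatched via coverage, only singleton assignments remain, and there your affine argument is correct and coincides with the paper's. For necessity, a functional vanishing on $\operatorname{span}\{\beta_j+\beta_1\}$ and equal to one on $\alpha+\beta_1$ refutes validity when $\alpha$ lies outside the affine hull; no parity adjustment is needed. For sufficiency, write $\alpha=\sum_{j\in J}\beta_j$ with $|J|$ odd, and use that an odd sum of bits equals one of its summands.
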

\begin{proof}
If a left variable is absent on the right, assign it the whole group and
all other variables the identity singleton; absorption fails.  For singleton
assignments $b\in\Ftwo^W$, validity says that some
$(\beta_j+\alpha)\cdot b=0$.  If $\alpha$ is outside the affine hull, put
$a=\beta_1+\alpha$ and
$L=\operatorname{span}\{\beta_j+\beta_1\}$.  A linear functional vanishing
on $L$ and taking value one on $a$ supplies a $b$ for which all these
dot products are one, a contradiction.
 
Conversely, an empty left-variable value makes the left side empty.
Assume that all values are nonempty.  If a variable value is the whole group, support coverage makes
some right word evaluate to the whole group.  Otherwise all values are
singletons.  The affine condition is equivalent to
$\alpha=\sum_{j\in J}\beta_j$ for a nonempty set $J$ of odd cardinality.
Over $\Ftwo$, an odd sum of bits equals one of the summands, so the left
value equals a right value.
\end{proof}

The identities in the following corollary belong to the class of
semilattice-ordered semigroups studied by Ren and Zhao \cite{RZ}.
We include a direct completeness proof because the absorption argument is
also needed for the lifting theorem.

\begin{corollary}\label{cor:c2basis}
Relative to the ai-semiring identities, $\Pow(C_2)$ has the finite basis
\[
 xy\approx yx,\qquad x^3\approx x.
\]
\end{corollary}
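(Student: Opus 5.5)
The plan is to verify validity first and then derive every valid identity of $\Pow(C_2)$ from the two proposed laws, using Lemma~\ref{lem:affine} as the description of valid absorptions.

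\textbf{Validity.} Commutativity of $\Pow(C_2)$ is inherited from $C_2$. For $x^3\approx x$ we argue by cases on the value $A$ of $x$. If $A$ is empty or a singleton, then $A^3=A$ because every element of $C_2$ has order dividing two. If $A=C_2$, then both sides equal $C_2$.

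\textbf{Completeness.} Every identity is equivalent to finitely many absorption identities $w\preceq Q$, so it suffices to derive each valid one. By Lemma~\ref{lem:empty} we may assume that every $q\in Q$ satisfies $c(q)\subseteq c(w)=W$. Lemma~\ref{lem:affine} then gives two facts:
\begin{itemize}
\item $\bigcup_j c(q_j)=W$;
\item $\alpha=\beta_{j_1}+\cdots+\beta_{j_m}$ for some odd number $m$ of (not necessarily distinct) indices.
\end{itemize}

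Using commutativity and $x^3\approx x$, every word reduces to a product $\prod_{x\in c(u)}x^{e_x}$ with $e_x\in\{1,2\}$. The exponent records the parity, with $e_x=2$ meaning even. Thus a word is determined modulo the basis by its support together with its parity vector.

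The key derivation is the following. From $Q$ we first get $Q\succeq q_{j_1}q_{j_2}\cdots q_{j_m}$ and then pad the support up to $W$.

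\emph{Odd products.} We need a derivable lemma: $x\preceq$-style domination of the form $x_1x_2x_3\preceq x_1+x_2+x_3$ fails in general, so instead we use the ai-semiring consequence
\[
(a+b+c)^3\succeq abc,
\]
obtained by expanding the cube. Together with $(a+b+c)^3\approx a+b+c$, an instance of $x^3\approx x$, this gives $abc\preceq a+b+c$. Iterating, any odd product of words from $Q$ lies below $Q$. More precisely, $Q^{m}\approx Q$ for odd $m$ follows from $Q^3\approx Q$, and $Q^m$ contains $q_{j_1}\cdots q_{j_m}$ as a summand.

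\emph{Padding the support.} To enlarge the support to all of $W$, we need, for each variable $y\in W$, a factor $y^2$. For this we use $Q^3\approx Q$ with summands chosen as $q\,q'\,q'$ where $y\in c(q')$. The word $q'q'=q'^2$ has even parity, so it shifts the support without changing the parity. Since $m$ is odd, we may replace the product by $q_{j_1}\cdots q_{j_m}\,(q'_1)^2\cdots(q'_p)^2$ with total odd count, again a summand of some odd power $Q^{m+2p}\approx Q$. Here the $q'_i$ range over words covering $W$.

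This product has support $W$ and parity $\alpha$, so after normalization it equals $w$. Hence $w\preceq Q$.

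The main obstacle I expect is bookkeeping: checking that the normal form really identifies $w$ with the padded product. In particular, the exponents $e_x$ must match once both parity and support agree. This holds because the exponent is $1$ exactly when the parity bit is $1$, and $2$ when the bit is $0$ and the variable is present.
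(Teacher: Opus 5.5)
Your proof is correct and is essentially the paper's argument: you reduce words to a normal form determined by support and parity, use Lemma~\ref{lem:affine} to obtain an odd index family, pad the odd product with squares of words covering $W$ (the paper simply squares all $q_j$), and absorb the result into $Q$ via $Q^{\text{odd}}\approx Q$. One stray remark should be deleted: $x_1x_2x_3\preceq x_1+x_2+x_3$ does \emph{not} fail, since it is valid in $\Pow(C_2)$ and is exactly what your next line derives from $(x_1+x_2+x_3)^3\approx x_1+x_2+x_3$.
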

\begin{proof}
The four group subsets satisfy these identities.  Commutativity and
$x^3\approx x$ reduce each positive multiplicity to one or two according
to its parity, so words with the same support and parity are provably equal.
For a valid absorption identity, first
delete outside variables and apply Lemma~\ref{lem:affine} to obtain an odd
index set $J$.  Put
\[
 v=\left(\prod_{j\in J}q_j\right)\left(\prod_{j=1}^s q_j^2\right).
\]
The words $v$ and $w$ have the same support and parity, so the two displayed
identities prove $v\approx w$.  Moreover
$v\preceq Q^{|J|+2s}\approx Q$ by distributivity and reduction of odd powers.
\end{proof}

\subsection{A finite lifting criterion}

\begin{theorem}[Short-word lifting]\label{thm:lift}
Let $S$ have an ideal $G\cong C_2$ with $S^d\subseteq G$, $d\ge2$.  Suppose
there are elements $a_0,a_1\in S$ of phases zero and one such that
\begin{equation}\label{eq:lift}
 \{a_0,a_1\}^{\,j}\cap G=\varnothing\qquad(1\le j<d).
\end{equation}
Then $\Pow(S)$ is finitely based.
\end{theorem}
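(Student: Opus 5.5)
We plan to mimic the sufficiency proof of Theorem~\ref{thm:nil} and Proposition~\ref{prop:maxpower}, with $\Pow(C_2)$ playing the role that $\{0\}$ played there. Words of length at least $d$ will again be called long. Such a word evaluates, under nonempty assignments, inside $G$, and its value depends only on the phase projections $\rho$ of the variable values. We therefore take as candidate basis a finite set consisting of three parts:
\begin{enumerate}[label=(\alph*),nosep]
\item long-word identities $L'_d$ analogous to $L_d$, namely permutations inside a window of length $d$, the cubing rule $x_1^3x_2\cdots x_d\approx x_1\cdots x_d$ replacing \eqref{eq:Ldup}, and a parity-aware version of absorption;
\item the finite short library $\Psi_S$, defined as in \eqref{eq:Psi} on a bounded number of variables, with long words represented by normal forms indexed by support and parity vector;
\item the identities coming from Corollary~\ref{cor:c2basis}, lifted to long words.
\end{enumerate}

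First we prove an analogue of Lemma~\ref{lem:normal}. Long words with the same support and the same parity vector are provably equal. Moreover, a long word $w$ is absorbed by a sum of long words $q_j$ whenever the condition of Lemma~\ref{lem:affine} holds. The reason is that on long words the evaluation factors through $\rho$ into $\Pow(C_2)$, so the derivation in Corollary~\ref{cor:c2basis} can be copied inside windows of length $d$. Short left words are handled by Lemma~\ref{lem:empty} together with $\Psi_S$, exactly as in the nilpotent case, since a short word involves fewer than $d$ variables and any valid identity with a short left word reduces to one on boundedly many variables.

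The main case is a valid absorption $w\preceq Q$ with $w$ long. Here we use \eqref{eq:lift}, in the way $\{a\}$ was used in Proposition~\ref{prop:maxpower}. Fix a phase vector $b\in\Ftwo^W$ and assign each variable $x$ the singleton $\{a_{b(x)}\}$. The left side is then the group element of phase $\alpha\cdot b$. Every short right word lies in $\{a_0,a_1\}^j$ for some $j<d$, so by \eqref{eq:lift} its value is outside $G$. Consequently the group element must be supplied by a long right word $q_j$ with $\beta_j\cdot b=\alpha\cdot b$. This holds for every $b$, so the long part $Q_{\mathrm{long}}$ of $Q$ satisfies the singleton-assignment condition of Lemma~\ref{lem:affine}, which gives $\alpha\in\aff\{\beta_j\}$.

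For support coverage, assign a variable $x\in W$ the set $\{a_0,a_1\}$ and all other variables of $W$ singletons $\{a_i\}$. The left side then becomes all of $G$, and no short word reaches $G$. Hence some long right word $q$ must evaluate to a set containing both phases, which requires $x\in c(q)$. Together these show that $\Pow(C_2)\models w\preceq Q_{\mathrm{long}}$ after passing to the phase projections. The lifted long-word identities then derive $w\preceq Q_{\mathrm{long}}\preceq Q$.

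The main obstacle is the lifting step. We must show that the $C_2$-derivation of Corollary~\ref{cor:c2basis} becomes a derivation among long words from finitely many identities of $\Pow(S)$, keeping all intermediate words long and of the correct parity, for instance by padding with squares $y^2$ of variables already present. We must also check that these finitely many identities are valid in $\Pow(S)$ when some values are empty or meet both phases. We would try to state $L'_d$ so that each identity involves only long words with equal supports, whose validity can be checked directly via $\rho$ and the retraction identities.
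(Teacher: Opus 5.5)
Your architecture matches the paper's. Window transpositions and the cubing rule $x_1\cdots x_d\approx x_1^3x_2\cdots x_d$ make long words with equal support and parity vector provably equal. A finite library on $d-1$ variables handles short left words. The elements $a_0,a_1$ reduce a long-left absorption $w\preceq Q$ to $\Pow(C_2)\models w\preceq Q_L$ for the long part $Q_L$, where Lemma~\ref{lem:affine} applies.

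The gap is the step you defer as ``the main obstacle,'' and that is where the theorem's content lies. Unlike the nilpotent case, you cannot finish with a single right-hand word. By Lemma~\ref{lem:affine}, one long word absorbs $w$ only if it already has the same support and parity. So no parity-aware analogue of \eqref{eq:Ldrop} closes the case. You need finitely many valid identities that absorb a product of \emph{different} long words into their sum, and the proposal does not name one. Copying Corollary~\ref{cor:c2basis} does not supply it as it stands. The key step there, $Q^{|J|+2s}\approx Q$, substitutes the sum $Q$ into $x^3\approx x$, which is not valid in $\Pow(S)$. Substituting $x_1:=Q_L$ into the cubing rule only gives $Q_Lx_2\cdots x_d\approx Q_L^3x_2\cdots x_d$, with $d-1$ extra factors attached. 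You do not say which valid identity would replace this step.

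The missing ingredient is the paper's triple identity
\[
 U_dV_dW_d\preceq U_d+V_d+W_d,
\]
where $U_d,V_d,W_d$ are length-$d$ words on pairwise disjoint variable sets.
\begin{itemize}
\item It is valid: long values lie in $G$, a product of three elements of $C_2$ equals one of them, and an empty value empties the left side.
\item Splitting long words $p,q,s$ into $d$ blocks gives $pqs\preceq p+q+s$. Hence $R^3\preceq R$ for every nonempty sum $R$ of long words, and multiplicative monotonicity with induction gives $R^{2m+1}\preceq R$.
\item Choose an odd $J$ with $\alpha=\sum_{j\in J}\beta_j$ and put $v=(\prod_{j\in J}q_j)(\prod_{j=1}^sq_j^2)$. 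Then $v$ is long and has the support and parity of $w$, so the window identities give $w\approx v$, while $v\preceq Q_L^{|J|+2s}\preceq Q_L$.
\end{itemize}

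A smaller repair is needed in your support-coverage step. Arbitrary singletons on the other variables do not force a single long word to contain both phases, since two long words avoiding $x$ could supply $e$ and $g$ separately. Either assign every other variable $\{a_0\}$, or, as the paper does, lift arbitrary $B_i\subseteq C_2$ to $\{a_b:b\in B_i\}$ to obtain $\Pow(C_2)\models w\preceq Q_L$ at once.
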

\begin{proof}
Let $E_d$ consist of all adjacent transposition identities in a length-$d$
window and
\begin{equation}\label{eq:Ed}
 x_1\cdots x_d\approx x_1^3x_2\cdots x_d .
\end{equation}
Long-word values depend only on group projections, and
$\Pow(C_2)\models x^3\approx x$, so $E_d$ is valid.  The window argument
from Lemma~\ref{lem:normal} allows arbitrary permutations and adds two
occurrences of any chosen variable.  Hence two long words with the same
support and parity vector can be made equal by $E_d$.
 
Add
\begin{equation}\label{eq:triple}
 U_dV_dW_d\preceq U_d+V_d+W_d,
\end{equation}
where $U_d,V_d,W_d$ are length-$d$ words on pairwise disjoint variable
sets.  This is valid because the product of any three elements of $C_2$ is
one of them; if a set value is empty, the left side is empty.  Splitting
each of three long words into $d$ nonempty blocks gives the corresponding
instance of \eqref{eq:triple}.  Consequently, if $R$ is any nonempty sum
of long words, distributivity yields $R^3\preceq R$.  Multiplicative
monotonicity and induction give
\begin{equation}\label{eq:oddR}
 R^{2m+1}\preceq R\qquad(m\ge0).
\end{equation}
 
On $r=d-1$ variables take a finite word library containing all short words
and one long representative for each nonempty support and parity vector.
Let $\mathcal W'_r$ denote this library and set
\[
 \Psi'_S=\left\{u\preceq\sum_{q\in Q}q:
 u\in\mathcal W'_r,\ \varnothing\ne Q\subseteq\mathcal W'_r,\
 \Pow(S)\models u\preceq\sum_{q\in Q}q\right\}.
\]
If a valid absorption identity has a short left word, empty-set elimination
restricts its right side to at most $r$ variables.  Long-word normalization
then reduces it to a member of $\Psi'_S$ after renaming variables.
 
Now let $w$ be long.  Restrict the right support and split the right side
into its long part $Q_L$ and short part.  For arbitrary group-subset values
$B_i$, lift them to $A_i=\{a_b:b\in B_i\}$.  Condition \eqref{eq:lift}
keeps every short right word outside $G$, while long words are evaluated by
their phase projections.  Nonempty singleton phase assignments show that
$Q_L$ is nonempty, and the same lifting for arbitrary phase subsets gives
$\Pow(C_2)\models w\preceq Q_L$.  Write $Q_L=q_1+\cdots+q_s$.
By Lemma~\ref{lem:affine}, choose an odd
index set $J$ and form
\[
 v=\left(\prod_{j\in J}q_j\right)\left(\prod_{j=1}^s q_j^2\right).
\]
The words $v$ and $w$ have the same support and parity, so $E_d\vdash
w\approx v$, while \eqref{eq:oddR} gives
$v\preceq Q_L^{|J|+2s}\preceq Q_L$.  Thus
\[
 E_d\cup\{\eqref{eq:triple}\}\cup\Psi'_S
\]
is a finite relative basis.
\end{proof}

\begin{proof}[Proof of Theorem~\ref{thm:product}]
The ideal of $N\times C_2$ is $\{0\}\times C_2$, and its identity fibre is
$N\times\{e\}\cong N$.  The necessary implication follows from
Corollary~\ref{cor:fibre}, without commutativity.  If $a^{d-1}\ne0$, take
$a_0=(a,e)$ and $a_1=(a,g)$.  Every product of fewer than $d$ such elements
has first coordinate $a^j\ne0$, so \eqref{eq:lift} holds and
Theorem~\ref{thm:lift} gives finite basis.  For commutative $N$,
Corollary~\ref{cor:comm} says that this is exactly the condition for
$\Pow(N)$ to be finitely based.  If $N$ is the one-point zero semigroup,
the product is $C_2$ and Corollary~\ref{cor:c2basis} applies.
\end{proof}

\begin{remark}
The coordinate projection
\[
 \Pow(N\times C_2)\longrightarrow\Pow(N)\times\Pow(C_2)
\]
loses the pairing information inside a subset and is generally not an
isomorphism.  The proof above works with arbitrary subset assignments and
does not make this identification.
\end{remark}

\section{A five-element example and further questions}

\begin{proposition}\label{prop:five}
Let $S_5=\{e,g,a,b,c\}$, with
$\rho(e)=\rho(c)=e$ and $\rho(g)=\rho(a)=\rho(b)=g$.  Apart from
$ab=ba=c$, let every binary product be the product of the two group phases.
Then $S_5$ is commutative, $S_5^3=C_2$,
$\Pow(\rho^{-1}(e))$ is finitely based, and $\Pow(S_5)$ is nonfinitely based.
\end{proposition}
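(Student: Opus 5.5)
The plan is to verify the algebraic claims directly from the multiplication table. After that, $\Pow(\rho^{-1}(e))$ is handled by Proposition~\ref{prop:maxpower}, and nonfinite basis of $\Pow(S_5)$ comes from Theorem~\ref{thm:phase} applied to odd cycles.

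\emph{Structure of $S_5$.} The key observation is that every binary product has the phase equal to the sum of the phases of its factors. This holds even for $ab=ba=c$, since $\rho(c)=e=g\cdot g$.

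\begin{itemize}[nosep]
\item \emph{Triple products.} For any $x,y,z\in S_5$, the element $xy$ is either in $G$ or equal to $c$. In both cases $(xy)z$ is a product involving an element not in $\{a,b\}$ paired against a third factor, or an element of $G$, so it is computed by phases. Hence $(xy)z=\rho(x)\rho(y)\rho(z)$. The same argument applies to $x(yz)$.
\item \emph{Consequences.} Associativity follows at once, and $S_5^3\subseteq G$. Since $G$ is a group, $G^3=G$, so $S_5^3=C_2$, and $G=\{e,g\}$ is an ideal.
\item \emph{Commutativity.} This is built into the table, because phase products commute.
\end{itemize}

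So the standing hypotheses of the phase section hold with $D=3$.

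\emph{The identity fibre.} Here $T=\rho^{-1}(e)=\{e,c\}$, with $c^2=e\cdot e=e$. Thus $T$ is a zero semigroup with zero $e$, nilpotency index $d=2$, and $c^{d-1}=c\ne e$. Proposition~\ref{prop:maxpower} then gives that $\Pow(T)$ is finitely based.

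\emph{Nonfinite basis.} Take forbidden element $h=e$ and $D=3$, so tuples have lengths $1$ or $2$. Given a finite graph $\Gamma$ without loops, form the prescribed-phase instance $H_\Gamma$ as follows:
\begin{itemize}[nosep]
\item every vertex has phase one;
\item each edge $\{u,v\}$ gives the binary tuple $(u,v)$.
\end{itemize}
A satisfying assignment sends each vertex into $\rho^{-1}(g)=\{g,a,b\}$ and requires $\theta(u)\theta(v)\ne e$ on each edge. From the table, among phase-one elements the only products different from $e$ are $ab=ba=c$; in particular $g$ times anything of phase one is $e$, and $a^2=b^2=e$. Hence every non-isolated vertex must take value $a$ or $b$, and adjacent vertices must take different values. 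Isolated vertices may take any phase-one value. So $H_\Gamma$ is satisfiable exactly when $\Gamma$ is bipartite.

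Now let $\Gamma$ be an odd cycle of length greater than $M$:
\begin{itemize}[nosep]
\item $\Gamma$ is not bipartite, so $H_\Gamma$ is unsatisfiable;
\item every induced subgraph on at most $M$ vertices is a proper induced subgraph of the cycle, hence a disjoint union of paths, hence bipartite;
\item induced subinstances of $H_\Gamma$ correspond to induced subgraphs, so all of them are satisfiable.
\end{itemize}
Since $M$ is arbitrary, Theorem~\ref{thm:phase} yields that $\Pow(S_5)$ is nonfinitely based.

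The main point needing care is the satisfiability analysis in the last step, which requires ruling out the value $g$ at non-isolated vertices. Here $g\cdot x=e$ for every $x$ of phase one, so any vertex incident to an edge cannot take the value $g$; this is exactly what makes satisfiability coincide with bipartiteness.
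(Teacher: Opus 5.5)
Your proposal is correct and follows essentially the same route as the paper. Phase-additivity of binary products gives associativity and $S_5^3=C_2$, Proposition~\ref{prop:maxpower} handles the zero semigroup $\{e,c\}$, and Theorem~\ref{thm:phase} applied to odd cycles with all phases one and $e$ forbidden on edges gives nonfinite basis; your explicit check that $g$ cannot occur at a non-isolated vertex simply spells out the paper's observation that only $(a,b)$ and $(b,a)$ have product different from $e$ in the odd fibre.
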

\begin{proof}
The multiplication is displayed in Table~\ref{tab:five}.

\begin{table}[htbp]
\caption{Multiplication in $S_5$.}\label{tab:five}
\centering
\setlength{\tabcolsep}{12pt}
\begin{tabular}{cccccc}
\toprule
$\cdot$&$e$&$g$&$a$&$b$&$c$\\
\midrule
$e$&$e$&$g$&$g$&$g$&$e$\\
$g$&$g$&$e$&$e$&$e$&$g$\\
$a$&$g$&$e$&$e$&$c$&$g$\\
$b$&$g$&$e$&$c$&$e$&$g$\\
$c$&$e$&$g$&$g$&$g$&$e$\\
\bottomrule
\end{tabular}
\end{table}

Every binary product is either in the group or is $c$; multiplying once
more always enters the group and gives the product of the three phases.
Thus all bracketings of a triple agree, and the table is associative.
The group is contained in $S_5^3$, so $S_5^3=C_2$.
 
The identity fibre $\{e,c\}$ has all binary products equal to $e$, so its
full power semiring is finitely based by Proposition~\ref{prop:maxpower}.
In the odd fibre $\{g,a,b\}$, a binary product is different from $e$ exactly
for the ordered pairs $(a,b)$ and $(b,a)$.
 
For every $M$, take an odd cycle of length greater than $M$, prescribe phase
one at all vertices, and forbid $e$ on every oriented edge.  A satisfying
assignment would have to alternate $a$ and $b$ around the cycle, which is
impossible.  Every induced subgraph on at most $M$ vertices is a disjoint
union of paths and isolated vertices and can be alternately assigned $a,b$.
Theorem~\ref{thm:phase} therefore gives nonfinite basis.
\end{proof}

For this example one also obtains a direct family of valid identities.  For
every odd $n\ge3$, with subscripts read cyclically,
\begin{equation}\label{eq:odd-cycle}
 \prod_{i=1}^n x_i^6
 \preceq \sum_{i=1}^n x_i^3+\sum_{i=1}^n x_ix_{i+1}.
\end{equation}
If a variable is empty, the left side is empty.  Mixed phase projections are
covered by the cubic terms; an even-phase variable has cube $e$; and if all
variables have odd phase, the absence of $e$ from the edge sum would produce
a two-colouring of the odd cycle.  Hence \eqref{eq:odd-cycle} is valid.

A converse to Theorem~\ref{thm:phase} would require control of the
linear relations between phases and their interaction with short
products. It remains open whether bounded local obstructions for all
prescribed-phase instances suffice for finite basability under
$S^D\subseteq C_2$. For arbitrary finite semigroups, several idempotents
and regular components introduce further interactions. The present
finite-duality classification applies to nilpotent semigroups, while
Theorem~\ref{thm:product} treats only the direct products specified there.

For a noncommutative nilpotent $N$, we have proved that finite basis of
$\Pow(N\times C_2)$ implies finite basis of $\Pow(N)$, and the converse under
the maximal pure-power condition.  Example~\ref{ex:directed} shows that
finite basis of $\Pow(N)$ alone does not imply that extra condition.  Closing
this gap, and obtaining a classification for arbitrary finite semigroups,
remain open.

\end{document}